\newtheorem{teo}{Theorem}[section]
\newtheorem{cor}[teo]{Corollary}
\newtheorem{lem}[teo]{Lemma}
\newtheorem{prop}[teo]{Proposition}
\newtheorem{re}[teo]{Remark}
\newtheorem{defi}[teo]{Definition}
\newcommand{\res}{\mathop{\hbox{\vrule height 7pt width .5pt depth 0pt
\vrule height .5pt width 6pt depth 0pt}}\nolimits}
\newcommand{\R}{\mathbb{R}}
\newcommand{\eps}{\varepsilon}
\title{SBV regularity for Hamilton-Jacobi equations in $\R^n$}
\author{Stefano Bianchini}
\address{Stefano Bianchini,
SISSA, via Beirut 2-4 34014 Trieste,
Italy}
\author{Camillo De Lellis}
\author{Roger Robyr}
\address{Camillo De Lellis, Roger Robyr,
Institut f\"ur Mathematik, Universit\"at Z\"urich,
Winterthurerstrasse 190, CH-8057
Z\"urich, Switzerland}
\email{bianchin@sissa.it, delellis@math.unizh.ch,
roger.robyr@math.unizh.ch}
\begin{document}
\begin{abstract}
In this paper we study the regularity of viscosity solutions
to the following Hamilton-Jacobi equations
$$
\partial_t u + H(D_{x} u)=0 \qquad \textrm{in } \Omega\subset
\R\times \R^{n}\, .
$$
In particular, under the assumption that the Hamiltonian
$H\in C^2(\R^n)$ is uniformly convex, we prove that $D_{x}u$
and $\partial_t u$ belong to the class $SBV_{loc}(\Omega)$.
\end{abstract}

 \maketitle

\section{Introduction}

In this paper, we consider viscosity solutions
$u$ to Hamilton-Jacobi equations
\begin{equation}\label{HJ initial}
\partial_{t}u+H(D_{x} u)=0 \qquad \textrm{in } \Omega\subset
[0,T]\times \R^{n} .
\end{equation}
As it is well known, solutions of the Cauchy problem for
\eqref{HJ initial} develop singularities of the gradient
in finite time, even if the initial data $u (0, \cdot)$
is extremely regular.
The theory of viscosity solutions, introduced by Crandall and Lions
30 years ago, provides several
powerful existence and uniqueness results which allow
to go beyond the formation of singularities. Moreover,
viscosity solutions are the limit of several smooth
approximations of \eqref{HJ initial}.
For a review of the concept of viscosity solution and the
related theory for equations of type \eqref{HJ initial}
we refer to \cite{bress1,cansin,lions}. 

In this paper we are concerned about the regularity
of such solutions, under the following key assumption:
\begin{equation}\label{convex ineq}
H\in C^2(\R^n) \qquad \mbox{and} \qquad 
c_{H}^{-1}Id_{n}\leq D^2H\leq c_{H}Id_{n} \quad\mbox{for some
$c_H>0$.}
\end{equation}
There is a vast literature about this issue. As it is
well-known, under the assumption \eqref{convex ineq}, any viscosity
solution $u$ of \eqref{HJ initial} is locally semiconcave in $x$. More precisely,
for every $K\subset\subset \Omega$ 
there is a constant $C$ (depending on $K, \Omega$ and $c_H$) such that
the function $x\mapsto u(t,x)- C |x|^2$ is concave on $K$. This easily implies that
$u$ is locally Lipschitz and that $\nabla u$ has locally bounded variation,
i.e. that the distributional Hessian $D^2_x u$
is a symmetric matrix of Radon measures. It is then not difficult
to see that the same conclusion holds for $\partial_t D_x u$ and
$\partial_{tt} u$. Note that this result is
independent of the boundary values of $u$ and can be regarded as an
interior regularization effect of the equation.

The rough intuitive picture
that one has in mind is therefore that of functions which are Lipschitz
and whose gradient is piecewise smooth, undergoing jump discontinuities
along a set of codimension $1$ (in space and time). A refined regularity
theory, which confirms this picture and goes beyond, analyzing the behavior
of the functions where singularities are formed, is available under 
further assumptions on the boundary values of $u$ (we refer
to the book \cite{cansin} for an account on this research topic). However, if the boundary values are
just Lipschitz, these results do not apply and the corresponding 
viscosity solutions might be indeed quite rough, if we understand 
their regularity only in a pointwise sense. 

In this paper we prove that the BV regularization
effect is in fact more subtle and there is a measure-theoretic analog of ``piecewise
$C^1$ with jumps of the gradients''. As a consequence of our analysis, 
we know for instance that the singular parts of the Radon
measures $\partial_{x_ix_j} u$, $\partial_{x_it} u$
and $\partial_{tt} u$ are concentrated on a rectifiable set of 
codimension $1$. This set is indeed the measure theoretic jump set $J_{D_x u}$
of $D_x u$ (see below for the precise definition). This
excludes, for instance, that the second derivative of $u$ 
can have a complicated
fractal behaviour. Using the language introduced in \cite{dgamb} we say that
$D_x u$ and $\partial_t u$ are (locally) special functions of bounded variation, i.e.
they belong to the space $SBV_{loc}$ (we refer to the monograph \cite{afp}
for more details). A typical example of a 
$1$-dimensional function which belongs to BV but not to SBV is the
classical Cantor staircase (cp. with Example 1.67 of \cite{afp}).

\begin{teo}\label{main theo}
Let $u$ be a viscosity
solution of \eqref{HJ initial}, assume \eqref{convex ineq}
and set $\Omega_{t}:=\{x\in \R^n:(t,x)\in
\Omega\}$.
Then, the set of times
\begin{equation}\label{e:exceptional}
S:=\{t:D_{x}u(t,.) \notin SBV_{loc}(\Omega_{t})\}
\end{equation}
is at most countable. In particular $D_x u, \partial_t u \in SBV_{loc} (\Omega)$.
\end{teo}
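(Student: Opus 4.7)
The plan is to adapt, to the multidimensional Hamilton--Jacobi setting, the ``monotone quantity plus jump-counting'' scheme introduced by Ambrosio and De Lellis for the SBV regularity of entropy solutions of one-dimensional scalar conservation laws.

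\emph{Step 1 (Reduction to a convex-function statement).} By the local semiconcavity already recalled, for every compact $K\subset \Omega$ there is $C=C(K)>0$ such that $v(t,x):= C|x|^2/2 - u(t,x)$ is convex in $x$ on $K$ for each fixed $t$. Consequently $D^2_x v(t,\cdot)$ is a positive semidefinite matrix-valued Radon measure, and the statement $D_x u(t,\cdot)\in SBV_{loc}(\Omega_t)$ is equivalent to the vanishing of the Cantor part of $D^2_x v(t,\cdot)$. Since the scalar trace $\mu_t := \mathrm{tr}\,D^2_x v(t,\cdot)$ is nonnegative and dominates every entry of the matrix measure, it suffices to show that $\mu_t^c=0$ for all $t$ outside an at most countable set. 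Once $D_xu\in SBV_{loc}$ is established, the SBV regularity of $\partial_t u$ follows from the PDE $\partial_t u = -H(D_xu)$ and the chain rule for BV functions, using $H\in C^2$.

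\emph{Step 2 (Backward characteristics and a monotone quantity).} For $s<t$ the local Hopf--Lax representation
$$u(t,x) \;=\; \min_{y\in\R^n}\Big[u(s,y) + (t-s)\,L\!\Big(\tfrac{x-y}{t-s}\Big)\Big],\qquad L:=H^*,$$
produces, at every point of differentiability of $u(t,\cdot)$, a unique minimizer $y=\chi_{s,t}(x)$: the backward characteristic. The map $\chi_{s,t}$ is defined $\mathcal L^n$-a.e., is countably Lipschitz, and $\chi_{s,t}^{-1}$ is the monotone Lipschitz map $y\mapsto y+(t-s)\nabla H(D_xu(s,y))$. I would introduce a ``singular-mass functional''
$$F(t) \;:=\; \mu_t^{\mathrm{sing}}(K')\qquad (K'\Subset K),$$
and use the transport properties of $\chi_{s,t}$ (together with the uniform convexity of $H$, which makes $\chi_{s,t}^{-1}$ strictly expanding) to show that $F$ decomposes into an absolutely continuous decreasing part plus upward jumps. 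Being effectively monotone, $F$ has at most countably many jumps, and the exceptional set $S$ will be captured by such a jump set (letting $K'$ exhaust $\Omega$ through a countable family).

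\emph{Step 3 (Main obstacle: detecting Cantor mass as a jump).} The crux is to show that a nonzero $\mu_{t_0}^c$ actually forces $F$ to jump at $t_0$. In one space dimension this is a direct consequence of Oleinik's one-sided bound and the shock structure of scalar conservation laws. In the multidimensional case I expect the argument to proceed via slicing: for each direction $e\in S^{n-1}$ and $\mathcal L^{n-1}$-a.e.\ $y\in e^\perp$, consider the one-dimensional function $s\mapsto e\cdot D_xu(t,y+se)$; its uniform convexity-induced dissipation along the line fits the hypotheses of the 1D SBV theorem, yielding SBV slice-wise outside a countable-in-$t$ set. The integral-geometric characterization of the Cantor part of a BV vector field (Fubini across directions $e$) then assembles the per-slice conclusions into the $n$-dimensional statement. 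The key technical difficulty is precisely this assembly: ensuring that the per-direction, per-line exceptional times combine into a single at most countable set of $t$, rather than merely a set of Lebesgue measure zero. Making this quantitative, by bounding the jump of $F$ from below by the total Cantor mass just absorbed, is where I expect the bulk of the work to lie.
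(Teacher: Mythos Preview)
Your proposal has the right high--level philosophy (a monotone--in--time functional whose jumps capture the bad times, plus the chain rule for $\partial_t u$), but the concrete implementation has two genuine gaps.

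First, the functional $F(t)=\mu_t^{\mathrm{sing}}(K')$ is not the right monotone quantity: there is no evident mechanism making the singular mass of $\Delta_x u(t,\cdot)$ monotone in $t$, or forcing the decomposition you describe. Singular mass can be created and destroyed along the flow in ways that are hard to track directly (and your formula for $\chi_{s,t}^{-1}$ is not Lipschitz, since $D_x u(s,\cdot)$ is only BV). The paper instead takes
\[
F(t)\;:=\;\bigl|\chi_{t,0}(\Omega_t)\bigr|,
\]
the Lebesgue measure of the image at time $0$ of the backward characteristic map. This \emph{is} genuinely nonincreasing, by the elementary inclusion $\chi_{t,0}(\Omega_t)\subset\chi_{s,0}(\Omega_s)$ for $s<t$ (characteristics do not cross, and a characteristic reaching time $t$ passes through time $s$).

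Second, and more seriously, the slicing strategy of Step 3 cannot deliver a countable exceptional set. You have identified the obstruction yourself: applying the one--dimensional theorem line by line gives, for each line $\ell$, a countable bad set $S_\ell\subset[0,T]$, but $\bigcup_\ell S_\ell$ over an uncountable family of lines need not be countable, or even null. No integral--geometric averaging repairs this, because countability is not a measure--theoretic property. The paper avoids slicing altogether. It proves, via an area--formula type estimate for the multivalued map $x\mapsto x-tDH(\partial u_t(x))$ (justified through the Alberti--Ambrosio current structure on graphs of maximal monotone maps and their Hille--Yosida approximations), that a Lebesgue--null set $E$ carrying the Cantor part of $D^2_x u_t$ satisfies
\[
|\chi_{t,0}(E)|\;\geq\; -\,c_1 t\int_E d(\Delta_c u_t)\;\geq\; c_1 t\,|D^2_c u_t|(\Omega_t)\;>\;0,
\]
and that $\chi_{t,0}(E)$ is disjoint from $\chi_{t+\delta,0}(\Omega_{t+\delta})$ for every $\delta>0$. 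Hence $F$ drops by a fixed positive amount at any $t$ with nontrivial Cantor part, and the monotonicity of $F$ finishes the proof. The missing idea in your proposal is precisely this Jacobian lower bound, which is where the uniform convexity of $H$ is spent; it replaces the slicing you were reaching for.
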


\begin{cor}\label{corollary1}
Under assumption \eqref{convex ineq}, 
the gradient of any viscosity solution $u$ of
\begin{equation}
H(D_{x}u)=0\qquad \textrm{in } \Omega\subset \R^{n},
\end{equation}
belongs to $SBV_{loc}(\Omega)$.
\end{cor}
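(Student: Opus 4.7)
The plan is to reduce the stationary problem to the evolutionary one treated in Theorem \ref{main theo}. Given a viscosity solution $u$ of $H(D_x u)=0$ on $\Omega\subset\R^n$, I would consider the function
\[
v:(0,1)\times\Omega\to\R,\qquad v(t,x):=u(x),
\]
and first check that $v$ is a viscosity solution of $\partial_t v+H(D_x v)=0$ on the cylinder $\widetilde\Omega:=(0,1)\times\Omega$. This is a routine test-function computation: if $\phi\in C^1(\widetilde\Omega)$ and $v-\phi$ has a local maximum at $(t_0,x_0)$, then, freezing $x=x_0$, the map $t\mapsto-\phi(t,x_0)$ has a local maximum at $t_0$, so $\partial_t\phi(t_0,x_0)=0$; freezing $t=t_0$, the function $u(\cdot)-\phi(t_0,\cdot)$ has a local maximum at $x_0$, so the viscosity subsolution property of $u$ yields $H(D_x\phi(t_0,x_0))\le 0$. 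Adding the two inequalities gives the required subsolution test, and the supersolution case is symmetric.

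Having established that $v$ solves the evolutionary equation on $\widetilde\Omega$, I would apply Theorem \ref{main theo} to $v$: the set
\[
S:=\bigl\{t\in(0,1):D_x v(t,\cdot)\notin SBV_{loc}(\widetilde\Omega_t)\bigr\}
\]
is at most countable. Since $\widetilde\Omega_t=\Omega$ and $D_x v(t,\cdot)=D_x u$ for every $t\in(0,1)$, the set $S$ is either empty or all of $(0,1)$. As $(0,1)$ is uncountable, $S$ must be empty, which gives $D_x u\in SBV_{loc}(\Omega)$.

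There is essentially no obstacle here beyond the verification that the time-independent lift $v(t,x)=u(x)$ genuinely qualifies as a viscosity solution in the sense of Crandall--Lions; this is folklore but needs to be written out. Everything else is a one-line observation about cardinality of the exceptional set of times.
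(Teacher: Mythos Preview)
Your proposal is correct and follows exactly the approach of the paper: lift $u$ to a time-independent solution $v(t,x)=u(x)$ of the evolutionary equation, apply Theorem~\ref{main theo}, and use that $D_x v(t,\cdot)=Du$ for every $t$ to force the exceptional set of times to be empty. You spell out the viscosity-solution verification and the cardinality argument a bit more explicitly than the paper does, but the strategy is identical.
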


Theorem \ref{main theo} was proved first by Luigi Ambrosio
and the second author in the special case $n=1$ (see \cite{adl}
and also \cite{robyr} for the extension to Hamiltonians
$H$ depending on $(t,x)$ and $u$). Some of the ideas
of our proof originate indeed in the work \cite{adl}. However,
in order to handle the higher dimensional case, some new
ideas are needed. In particular, a key role is played
by the geometrical theory of monotone functions developed
by Alberti and Ambrosio in \cite{aa}.

\section{Preliminaries: the theory of monotone functions}
\begin{defi}
Let $\Omega\subset \R^{n}$ be an open set. We say that a continuous
function $u: \Omega\rightarrow \R$ is \emph{semiconcave}
if, for any convex $K \subset\subset \Omega$, there exists
$C_{K}> 0$ such that
\begin{equation}\label{e:semiconc-cond}
u(x+h)+u(x-h)-2u(x)\leq C_{K}|h|^{2},
\end{equation}
for all $x,h \in \R^n$ with $x,x-h,x+h \in K$.
The smallest nonnegative costant $C_K$ such that \eqref{e:semiconc-cond}
holds on $K$ will be called {\em semiconcavity constant of $u$ on $K$}.
\end{defi}

Next, we introduce the concept of superdifferential.
\begin{defi}
Let $u: \Omega \to \R$ be a measurable function.
The set $\partial u(x)$,
called the
$\emph{superdifferential}$
of $u$ at point $x\in \Omega$, is defined as
\begin{align}
\partial u(x)&:=\Big{\{}p\in \R^n: \limsup_{y\rightarrow
x}\frac{u(y)-u(x)-p\cdot(y-x)}{|y-x|}\leq 0\Big{\}}.
\end{align}
\end{defi}

Using the above definition we can describe some properties of
semiconcave functions (see Proposition 1.1.3 of \cite{cansin}):
\begin{prop}\label{concave proposition}
Let $\Omega\subset \R^n$ be open and $K\subset \Omega$ a 
compact convex set. Let $u:\Omega\rightarrow
\R$ be a semiconcave function with semiconcavity constant
 $C_K \geq 0$. Then, the function
\begin{equation}
\tilde{u}:x\mapsto u(x)-\frac{C_K}{2}|x|^2 \qquad \textrm{ is concave in }K.
\end{equation}
In particular, for any given $x,y\in K$, $p\in \partial \tilde u(x)$ and
$q\in \partial \tilde{u} (y)$ we have that
\begin{equation}
\langle q-p,y-x\rangle \leq 0.
\end{equation}
\end{prop}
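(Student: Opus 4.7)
The plan is to prove the two assertions in order: first the concavity of $\tilde u$, then the monotonicity of the superdifferential, which is a general fact about concave functions.

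For the concavity, I would compute the second difference of $\tilde u$ directly. The parallelogram identity $|x+h|^2 + |x-h|^2 - 2|x|^2 = 2|h|^2$ gives
\begin{equation*}
\tilde u(x+h) + \tilde u(x-h) - 2\tilde u(x) = \bigl[u(x+h) + u(x-h) - 2u(x)\bigr] - C_K|h|^2,
\end{equation*}
which is $\leq 0$ by the semiconcavity bound \eqref{e:semiconc-cond}. Hence $\tilde u$ is midpoint-concave on the convex set $K$, and since $u$ and $x\mapsto |x|^2$ are continuous, so is $\tilde u$; continuity together with midpoint-concavity yields full concavity on $K$ by the standard dyadic approximation argument.

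For the monotonicity statement, I would first upgrade the limsup characterization of $\partial \tilde u(x)$ to the global supporting-hyperplane inequality
\begin{equation*}
\tilde u(y)\leq \tilde u(x) + p\cdot(y-x) \qquad \text{for every } y\in K,\ p\in \partial \tilde u(x).
\end{equation*}
Indeed, fixing $p\in\partial \tilde u(x)$ and $y\in K$, convexity of $K$ places $y_t:=x+t(y-x)$ in $K$ for $t\in[0,1]$; concavity of $\tilde u$ along this segment yields $\tilde u(y_t)-\tilde u(x)\geq t\bigl(\tilde u(y)-\tilde u(x)\bigr)$, while the definition of the superdifferential gives $\limsup_{t\downarrow 0}\bigl(\tilde u(y_t)-\tilde u(x)\bigr)/t\leq p\cdot(y-x)$; combining the two produces the global inequality.

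Writing this inequality once for $p\in\partial \tilde u(x)$ evaluated at $y$, and once for $q\in\partial \tilde u(y)$ evaluated at $x$, and summing, the $\tilde u$ terms cancel, leaving $0\leq (p-q)\cdot(y-x)$, that is $\langle q-p,\,y-x\rangle\leq 0$. The only step with any real content is the passage from midpoint-concavity to full concavity on $K$, which is a standard real-analysis fact and thus not really an obstacle; everything else is routine manipulation of the definitions.
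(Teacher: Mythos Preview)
Your proof is correct. The paper does not actually prove this proposition; it simply refers the reader to Proposition~1.1.3 of \cite{cansin}. Your argument---reducing concavity to midpoint-concavity via the parallelogram identity and then deducing the monotonicity of the superdifferential from the supporting-hyperplane inequality---is precisely the standard proof one finds in that reference, so there is no meaningful difference in approach to report.
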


From now on, when $u$ is a semi--concave function,
we will denote the set-valued map $x\to \partial \tilde{u} (x) + C_K x$
as $\partial u$. An important observation is that, being $\tilde{u}$
concave, the map $x\to \partial \tilde{u} (x)$ is a maximal monotone
function.

\subsection{Monotone functions in $\R^{n}$}\label{chap mononotone}

Following the work of Alberti and Ambrosio \cite{aa} we introduce
here some results about the theory of monotone functions in
$\R^{n}$. Let $B:\R^n\rightarrow \R^n$ be a set-valued map (or
multifunction), i.e. a map which maps every point $x\in \R^n$ into
some set $B(x)\subset \R^n$. For all $x\in \R^n$ we define:
\begin{itemize}
  \item the {\em domain} of $B$, $Dm (B):=\{x:B(x)\neq\emptyset\}$,
  \item the {\em image} of $B$, $Im (B):=\{y: \exists x, y\in B(x)\}$,
  \item the {\em graph} of $B$,
$\Gamma B:=\{(x,y)\in \R^n \times \R^n: y\in
  B(x)\}$,
  \item then {\em inverse} of $B$, $[B^{-1}](x):=\{y:x\in B(y)\}$.
\end{itemize}

\begin{defi}\label{def monotone fct} Let $B:\R^n \rightarrow \R^n$
be a multifunction, then
\begin{enumerate}
    \item $B$ is a \emph{monotone} function if
    \begin{equation}\label{e:minoreuguale}
    \langle y_{1}-y_{2},x_{1}-x_{2}\rangle\leq 0 \qquad \forall
    x_{i} \in \R^{n}, y_{i} \in B(x_{i}), i=1,2.
    \end{equation}
    \item A monotone function $B$ is called \emph{maximal} when it
    is maximal with respect to the inclusion in the class of
    monotone functions, i.e. if the following implication holds:
    \begin{equation}
    A(x)\supset B(x) \textrm{ for all }x, A \textrm{ monotone
    }\Rightarrow A=B.
    \end{equation}
\end{enumerate}
\end{defi}

Observe that in this work we assume $\leq$ in \eqref{e:minoreuguale}
instead of the most common $\geq$. However, one can pass
from one convention to the other by simply considering $-B$ instead of
$B$. The observation of the previous subsection is then
summarized in the following Theorem.

\begin{teo}\label{t:concave}
The supergradient $\partial u$ of a concave function is a maximal
monotone function.
\end{teo}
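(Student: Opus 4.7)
The plan is to verify monotonicity and maximality of $\partial u$ separately, with one auxiliary fact doing most of the work: for a concave function $u$, $p\in\partial u(x)$ is equivalent to the \emph{global} affine-support inequality
\begin{equation*}
u(y)\leq u(x)+\langle p,y-x\rangle\qquad\text{for every }y.
\end{equation*}
This upgrade from the infinitesimal $\limsup$ of Definition 2.2 to a global bound follows by restricting $u$ to the segment from $x$ to $y$: along it, $t\mapsto g(t):=u(x+t(y-x))-u(x)-t\langle p,y-x\rangle$ is concave with $g(0)=0$, and the definition of $\partial u$ forces $\limsup_{t\to 0^+} g(t)/t\leq 0$, so the non-increasing slope property of a one-variable concave function gives $g(t)\leq 0$ throughout $[0,1]$.

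With this in hand, monotonicity is immediate: for $p\in\partial u(x)$ and $q\in\partial u(y)$, add the two global inequalities $u(y)\leq u(x)+\langle p,y-x\rangle$ and $u(x)\leq u(y)+\langle q,x-y\rangle$ to obtain $\langle q-p,y-x\rangle\leq 0$, matching the convention of Definition 2.3.

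For maximality, assume $A\supset\partial u$ is monotone, fix $x_0$ and $p_0\in A(x_0)$, and pick an arbitrary $z$; it suffices to show $u(z)\leq u(x_0)+\langle p_0,z-x_0\rangle$. Set $x_t:=x_0+t(z-x_0)$ for $t\in(0,1)$ and select $p_t\in\partial u(x_t)$, non-empty on the interior of the effective domain of a concave function. Monotonicity of $A$ against the pair $(x_0,p_0)$ yields $t\langle p_t-p_0,z-x_0\rangle\leq 0$, i.e.\ $\langle p_t,z-x_0\rangle\leq\langle p_0,z-x_0\rangle$. Meanwhile the global support inequality applied to $p_t\in\partial u(x_t)$ at the point $z$ gives
\begin{equation*}
u(z)\leq u(x_t)+(1-t)\langle p_t,z-x_0\rangle\leq u(x_t)+(1-t)\langle p_0,z-x_0\rangle.
\end{equation*}
Letting $t\to 0^+$ and using continuity of $u$ at $x_0$ produces the required bound.

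The only delicate step I expect is the upgrade from the infinitesimal $\limsup$ condition to the global support inequality; once this is in hand both halves are essentially algebraic. I would also flag at the start the standard fact that for a concave $u$ the supergradient $\partial u(x)$ is non-empty on the interior of the effective domain, since the segment construction in the maximality argument relies on being able to choose $p_t$ for every small $t$.
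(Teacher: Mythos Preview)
The paper does not actually supply a proof of this theorem: it is stated as a summary of the preceding discussion (``The observation of the previous subsection is then summarized in the following Theorem'') and the surrounding theory is imported from Alberti--Ambrosio \cite{aa}. So there is no in-paper argument to compare against.

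Your argument is correct and self-contained. The upgrade from the infinitesimal $\limsup$ condition to the global affine-support inequality via the one-variable slope monotonicity is the standard route, and once that is in hand both monotonicity (adding the two support inequalities) and maximality (the segment trick $x_t=x_0+t(z-x_0)$ combined with monotonicity of the extension $A$) go through exactly as you wrote. The computation $z-x_t=(1-t)(z-x_0)$ is right, and the passage to the limit uses only interior continuity of concave functions. Your closing caveats are well placed: the argument, as written, lives on the interior of the effective domain (where $\partial u(x_t)\neq\emptyset$ and $u$ is continuous), which is also the setting the paper works in, since it considers concave functions on open sets.
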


An important tool of the theory of maximal monotone functions,
which will play a key role in this paper, is the
Hille-Yosida approximation
(see Chapters 6 and 7 of \cite{aa}):
\begin{defi}\label{d:Hille}
For every $\varepsilon>0$
we set
$\Psi_{\varepsilon}(x,y):=(x-\varepsilon y,y)$ for all $(x,y)\in
\R^n \times \R^n$,
and for every maximal monotone function $B$ we
define $B_{\varepsilon}$ as the multifunction whose graph is
$\Psi_{\varepsilon}(\Gamma B)$,
that is, $\Gamma B_{\varepsilon}=\{(x-\varepsilon y,y):
(x,y)\in \Gamma B\}$. Hence
\begin{equation}
B_{\varepsilon}:=(\varepsilon Id-B^{-1})^{-1}.
\end{equation}
\end{defi}

In the next Theorems we collect some properties of maximal
monotone functions $B$ and their approximations
$B_{\varepsilon}$ defined above.

\begin{teo}\label{t:cont}
Let $B$ be a maximal monotone function.
Then, the set
$S(B):= \{x: B (x) \mbox{ is not single valued}\}$
is a $\mathcal{H}^{n-1}$ rectifiable set.
Let $\tilde{B}: Dm (B)\to \R^n$ be such that $\tilde{B} (x) \in
\tilde{B} (x)$ for every $x$.
Then $\tilde{B}$ is a measurable function and $B(x)=\{\tilde{B} (x)\}$
for a.e. $x$.
If $Dm (B)$ is open, then $D\tilde{B}$ is a measure, i.e. $\tilde{B}$ is a
function of locally bounded variation. 

If $K_i$ is a sequence of compact sets contained
in the interior of $Dm (B)$ with $K_i\downarrow
K$, then $B (K_i)\to B(K)$ in the Hausdorff sense.
Therefore, the map $\tilde{B}$ is continuous at every $x\not\in S(B)$.

Finally, if $Dm (B)$ is open and $B=\partial u$ for
some concave function $u: Dm (B)\to \R$, then $\tilde{B}(x)
= Du (x)$ for a.e. $x$ (recall that $u$ is locally
Lipschitz, and hence the distributional
derivative of $u$ coincides a.e. with the classical differential).
\end{teo}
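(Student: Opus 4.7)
The plan is to exploit the tight link between maximal monotone multifunctions and convex potentials. For every $x \in Dm(B)$ the set $B(x)$ is closed (by closedness of $\Gamma B$, which follows from maximality) and convex (any convex combination of $y_1,y_2 \in B(x)$ preserves monotonicity when adjoined to $\Gamma B$, hence by maximality lies in $B(x)$). Thus $S(B)$ is exactly the set where the closed convex set $B(x)$ has positive diameter, and the four assertions of the theorem can be attacked in turn.

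For the $\mathcal{H}^{n-1}$-rectifiability of $S(B)$, I would use the Hille-Yosida approximations $B_\varepsilon$ of Definition \ref{d:Hille}. For every $\varepsilon>0$ the map $B_\varepsilon$ is single-valued and Lipschitz on its domain, and $\Psi_\varepsilon$ is a linear isomorphism of $\R^n\times\R^n$ sending $\Gamma B$ onto $\Gamma B_\varepsilon$. Hence $\Gamma B$ is an $n$-rectifiable Lipschitz submanifold of $\R^n\times\R^n$. The projection $\pi_1(x,y)=x$ is then a $1$-Lipschitz surjection from $\Gamma B$ onto $Dm(B)$, and $S(B)$ is the image of its non-injectivity set. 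A $45^\circ$ rotation in $\R^n\times\R^n$ identifies $\Gamma B$ with the subdifferential graph of a convex function, so the rectifiability of $S(B)$ reduces to the classical structure theorem for singular sets of convex functions in $\R^n$.

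For the remaining claims it suffices to consider $B=\partial u$ with $u$ concave on an open $Dm(B)$, the case actually relevant to the paper. Any selection $\tilde B$ is Borel measurable because $\Gamma B$ is closed and single-valued off the (rectifiable, hence Lebesgue-negligible) set $S(B)$. Local BV regularity of $\tilde B$ follows by writing $\tilde B=-D(-u)$ pointwise a.e.: since $-u$ is convex and locally Lipschitz, its distributional gradient has a derivative which is a symmetric matrix of Radon measures, and the identity $\tilde B=Du$ a.e. is just the fact that a concave function is classically differentiable at a.e. point with $\partial u=\{Du\}$ there. For the Hausdorff convergence $B(K_i)\to B(K)$ I would combine closedness of $\Gamma B$ with local boundedness of $B$ on the interior of $Dm(B)$ (inherited from local Lipschitzness of $u$) to get $\limsup_i B(K_i)\subset B(K)$, the reverse inclusion being trivial. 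Specializing to a sequence of closed balls shrinking to a point $x\notin S(B)$, where $B(x)=\{\tilde B(x)\}$, yields continuity of $\tilde B$ at $x$.

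The main obstacle is the rectifiability of $S(B)$: reducing an abstract maximal monotone $B$ to the subdifferential of a convex function requires the Alberti-Ambrosio graph parametrization, which must then be combined with the nontrivial singular-set rectifiability theorem for convex functions. All the other statements are essentially corollaries, either of this structural fact or of the elementary closed-graph and maximality properties of $\Gamma B$.
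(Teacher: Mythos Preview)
The paper's own proof of this theorem is little more than a list of citations to Alberti--Ambrosio \cite{aa}: Theorem 2.2 there for the rectifiability of $S(B)$, Proposition 5.1 for the BV regularity, Theorem 7.11 for the identification $\tilde B=Du$, and Corollary 1.3(3) (local boundedness of $B$ on compacta in the interior of $Dm(B)$) combined with closedness of $\Gamma B$ for the Hausdorff convergence. Your treatment of the Hausdorff convergence and of the measurability, BV and $\tilde B=Du$ claims in the concave case is correct and essentially reproduces what the paper invokes.

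There is, however, a slip in your rectifiability argument. The $45^\circ$ rotation (the Minty/Cayley transform) turns $\Gamma B$ into the graph of a \emph{nonexpansive map} $\R^n\to\R^n$, not into the subdifferential graph of a convex function. These coincide only in dimension one; for $n\ge 2$ a general maximal monotone operator is not a subdifferential, and neither is its Cayley transform. So you cannot reduce the rectifiability of $S(B)$ to the Zaj\'{\i}\v{c}ek structure theorem for singular sets of convex functions by rotating. Alberti--Ambrosio's Theorem 2.2 handles the general case by a different route (stratifying $S(B)$ by the dimension of $B(x)$ and using the Lipschitz graph structure of $\Gamma B_\varepsilon$), and the paper simply cites it. Note, incidentally, that in the case you say is the only relevant one, $B=\partial u$ with $u$ concave, no rotation is needed at all: $S(B)$ is then exactly the non-differentiability set of the convex function $-u$, and Zaj\'{\i}\v{c}ek's theorem applies directly.
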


\begin{proof} First of all, note that, by Theorem 2.2 of \cite{aa},
$S(B)$ is the union of rectifiable sets of Hausdorff dimension 
$n-k$, $k\geq 1$. This guarantees the existence of the classical
measurable function $\tilde{B}$. The BV regularity when
$Dm (B)$ is open is shown in Proposition 5.1 of \cite{aa}.

\medskip

Next, let $K$ be a compact set contained in the interior of $Dm (B)$.
By Corollary 1.3(3) of \cite{aa}, $B (K)$ is bounded. Thus,
since $\Gamma B \cap K\times \R^n$s is closed by maximal
monotonicity, it turns out that it is also compact. The continuity
claimed in the second paragraph of the Theorem is then a simple
consequence of this observation.


\medskip

The final paragraph of the Theorem is proved in Theorem 7.11 of \cite{aa}.
\end{proof}

In this paper, since we will always consider monotone functions
that are the supergradients of some concave functions, we will
use $\partial u$ for the supergradient and $Du$ for the distributional
gradient. A corollary of Theorem \ref{t:cont} is that

\begin{cor}\label{c:contconc}
If $u: \Omega \to \R$ is semiconcave, then $\partial u (x)
= \{Du (x)\}$ for a.e. $x$, and at any point where
$\partial u$ is single--valued, $Du$ is continuous. Moreover
$D^2 u$ is a symmetric matrix of Radon measures.
\end{cor}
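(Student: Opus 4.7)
The plan is to reduce the semiconcave statement to the concave case already handled in Theorem \ref{t:cont}. By Proposition \ref{concave proposition}, for every compact convex $K \subset\subset \Omega$ the auxiliary function $\tilde u(x) := u(x) - \frac{C_K}{2} |x|^2$ is concave on $K$, and by construction (from the paragraph following Proposition \ref{concave proposition}) $\partial u(x) = \partial \tilde u(x) + C_K x$ on $K$. Since the identity map $x \mapsto C_K x$ is smooth and single-valued, the set where $\partial u$ is multi-valued coincides with the set where $\partial \tilde u$ is, and Du exists at $x$ iff $D\tilde u$ does, with $Du(x) = D\tilde u(x) + C_K x$.

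I would then apply Theorem \ref{t:cont} to the maximal monotone function $B = \partial \tilde u$ on the open set $\mathrm{int}(K)$ (which is contained in $Dm(B)$ by concavity of $\tilde u$ on $K$, cf. Theorem \ref{t:concave}). Theorem \ref{t:cont} gives that $S(\partial \tilde u)$ is $\mathcal{H}^{n-1}$-rectifiable, hence of Lebesgue measure zero, that $\partial \tilde u(x) = \{D\tilde u(x)\}$ for a.e.\ $x$, and that $D\tilde u$ is continuous at every point of $\mathrm{int}(K) \setminus S(\partial \tilde u)$, i.e.\ at every point where $\partial \tilde u$ is single-valued. Transporting these conclusions back to $u$ via the affine relation above yields the first two assertions of the corollary on $\mathrm{int}(K)$. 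Exhausting $\Omega$ by a countable increasing sequence of compact convex subsets gives the statements on all of $\Omega$.

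For the last assertion, Theorem \ref{t:cont} also tells us that $D\tilde u$ is of locally bounded variation on $\mathrm{int}(K)$, so $D^2 \tilde u$ is a matrix of Radon measures there, and adding the smooth contribution $C_K\,\mathrm{Id}$ shows the same for $D^2 u$; symmetry is automatic because mixed distributional partials commute ($\partial_i \partial_j u = \partial_j \partial_i u$ as distributions, and since both sides are Radon measures they agree as measures). A countable covering argument then gives that $D^2 u$ is a symmetric matrix of Radon measures on all of $\Omega$.

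The only mildly delicate point is keeping the bookkeeping of domains straight when invoking Theorem \ref{t:cont}: its hypotheses require $Dm(B)$ to be open and $B$ to be maximal monotone, so one should work on $\mathrm{int}(K)$ (where concavity holds and $\partial \tilde u$ is genuinely maximal by Theorem \ref{t:concave}) rather than on $K$ itself. Beyond this, all ingredients are already in place and the argument is essentially a translation of the concave result through the affine change $\tilde u \leftrightarrow u$.
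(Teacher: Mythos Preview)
Your argument is correct and matches the paper's intended approach: the paper states Corollary~\ref{c:contconc} as a direct consequence of Theorem~\ref{t:cont} without giving a separate proof, and the reduction you spell out---pass from $u$ to the concave $\tilde u = u - \tfrac{C_K}{2}|x|^2$ via Proposition~\ref{concave proposition}, apply Theorem~\ref{t:cont} to $\partial\tilde u$, then transfer back through the affine shift and exhaust $\Omega$---is exactly the translation the paper has in mind. The care you take about working on $\mathrm{int}(K)$ and invoking Theorem~\ref{t:concave} for maximality is appropriate and nothing further is needed.
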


Next we state the following important convergence theorem.
For the notion of current and the corresponding convergence
properties we refer to the work of Alberti and Ambrosio.
However, we remark that very little of the theory
of currents is needed in this paper: what we actually need is a simple corollary of
the convergence in (ii), which is stated and proved in Subsection \ref{ss:approx}.
In (iii) we follow the usual convention
of denoting by $|\mu|$
the total variation of a (real-, resp. matrix-,
vector- valued) measure $\mu$. The theorem stated below is in fact
contained in Theorem 6.2 of \cite{aa}.

\begin{teo}\label{convergence teo}
Let $\Omega$ be an open and convex subset of $\R^n$ and
let $B$ be a maximal monotone function such that
$\Omega\subset Dm(B)$. Let
$B_{\varepsilon}$ be the approximations given in Definition
\ref{d:Hille}. Then, the following properties hold.
\begin{enumerate}
    \item[(i)] $B_{\varepsilon}$ is a $1/\varepsilon$-Lipschitz maximal
    monotone function on $\R^n$ for every $\varepsilon>0$. Moreover,
if $B= Du$, then $B_\eps = Du_\eps$ for the concave function 
\begin{equation}\label{e:esplicita}
 u_\eps (x) \;:=\; \inf_{y\in \R^n} \left\{ u (y) + \frac{1}{2\eps} |x-y|^2\right\}
\end{equation}
    \item[(ii)] $\Gamma B$ and $\Gamma B_\eps$
have a natural structure as integer rectifiable currents,
and $\Gamma B_{\varepsilon} \res
\Omega \times \R^n$ converges to $\Gamma B \res \Omega\times \R^n$ in
the sense of currents
as $\varepsilon\downarrow 0$.
    \item[(iii)] $DB_{\varepsilon}\rightharpoonup^* D\tilde{B}$ and
   $|DB_{\varepsilon}|\rightharpoonup^*
    |D\tilde{B}|$ in the sense of measures on $\Omega$.
\end{enumerate}
\end{teo}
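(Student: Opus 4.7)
The plan is to derive all three assertions from the identity $\Gamma B_\eps=\Psi_\eps(\Gamma B)$, together with standard facts about maximal monotone operators.

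For (i), I would extract single-valuedness and the $1/\eps$-Lipschitz bound of $B_\eps$ directly from monotonicity applied to pairs $y_i\in B(z_i)$ with $x_i=z_i-\eps y_i$: the inequality $\langle y_1-y_2,z_1-z_2\rangle\le 0$ gives
\[
\eps|y_1-y_2|^2\le-\langle y_1-y_2,x_1-x_2\rangle\le|y_1-y_2|\,|x_1-x_2|.
\]
The same bound shows that $B_\eps$ is monotone, and the fact that its domain is all of $\R^n$ (Minty's theorem applied to $\eps\mathrm{Id}-B^{-1}$) then forces maximality. For the explicit formula when $B=\partial u$, I would view $u_\eps$ as an infimal convolution of $u$ with a paraboloid: the optimizer $y(x)$ in the variational problem satisfies $(x-y)/\eps\in\partial u(y)$, so $(x-y)/\eps\in B_\eps(x)$ by the very definition of $B_\eps$; a direct computation also gives $Du_\eps(x)=(x-y)/\eps$, and single-valuedness of $B_\eps$ closes the loop.

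For (ii), the graph $\Gamma B$ is $n$-rectifiable in $\R^{2n}$ by Theorem 2.2 of \cite{aa}, and its tangent $n$-plane can be written a.e.\ as the graph of a negative semidefinite symmetric linear map (in our sign convention), which fixes a canonical orientation and hence an integer rectifiable current structure. The same construction applied to the Lipschitz maximal monotone map $B_\eps$ produces $\Gamma B_\eps$ as an integer rectifiable current, and because $\Psi_\eps$ is a smooth affine diffeomorphism of $\R^{2n}$ we have $\Gamma B_\eps=(\Psi_\eps)_\#\Gamma B$. The convergence $\Psi_\eps\to\mathrm{Id}$ uniformly on bounded sets, combined with the local boundedness of $B$ on compact $K\subset\mathrm{int}\,Dm(B)$ from Corollary 1.3(3) of \cite{aa}, then yields the stated convergence of currents on $\Omega\times\R^n$.

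For (iii), the weak-$*$ convergence $DB_\eps\rightharpoonup^* D\tilde B$ follows from (ii) by integration by parts: for a test function $\phi$,
\[
\int\phi\,dDB_\eps=-\int B_\eps\otimes\nabla\phi\,dx\to-\int\tilde B\otimes\nabla\phi\,dx=\int\phi\,dD\tilde B,
\]
using that $B_\eps\to\tilde B$ a.e.\ on $\Omega$ (a consequence of the current convergence together with the continuity of $\tilde B$ off $S(B)$) and the local $L^\infty$ bound on $B_\eps$. The sharper convergence $|DB_\eps|\rightharpoonup^*|D\tilde B|$ is the main obstacle. I would exploit the one-sided sign of $D\tilde B$ (negative semidefinite in our convention) to control $|DB_\eps|$ pointwise, up to a dimensional constant, by the scalar measure $-\operatorname{tr}(DB_\eps)$. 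Since the trace is a continuous linear functional on matrix-valued measures, the weak-$*$ convergence above upgrades to $-\operatorname{tr}(DB_\eps)\rightharpoonup^*-\operatorname{tr}(D\tilde B)$ as positive measures. For a weakly-$*$ converging family of positive measures with uniformly bounded total masses, the absence of mass escape to the boundary (which the uniform convergence of $\Psi_\eps$ to the identity guarantees) gives total-variation convergence, completing the argument.
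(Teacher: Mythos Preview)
The paper does not supply its own proof of this theorem: immediately before the statement it records that the result ``is in fact contained in Theorem~6.2 of~\cite{aa}'', and nothing further is argued. So there is no in-paper proof to compare against, only the Alberti--Ambrosio original.

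Your treatments of (i) and (ii), and of the first half of (iii) (namely $DB_\eps\rightharpoonup^* D\tilde B$ via $B_\eps\to\tilde B$ a.e.\ with a uniform $L^\infty$ bound), are standard and essentially correct; the identity $\Gamma B_\eps=(\Psi_\eps)_\#\Gamma B$ together with $\Psi_\eps\to\mathrm{Id}$ on bounded sets is indeed how \cite{aa} proceeds.

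There is, however, a genuine gap in your argument for $|DB_\eps|\rightharpoonup^*|D\tilde B|$. First, the pointwise bound $|DB_\eps|\le c_n\,(-\operatorname{tr} DB_\eps)$ requires $DB_\eps$ to be \emph{symmetric} negative semidefinite; for a general maximal monotone $B$ only the symmetric part of $DB_\eps$ is sign-definite, and the antisymmetric part is not controlled by the trace, so your bound already restricts you to the gradient case $B=\partial u$. Second, and more seriously, even in the symmetric case the trace only \emph{dominates} $|DB_\eps|$ (with the reverse inequality carrying a dimensional factor), so knowing $-\operatorname{tr}(DB_\eps)\rightharpoonup^*-\operatorname{tr}(D\tilde B)$ yields at best
\[
\limsup_{\eps\downarrow 0}|DB_\eps|(K)\;\le\;-\operatorname{tr}(D\tilde B)(K)\;\le\;\sqrt n\,|D\tilde B|(K),
\]
which does not close against the lower-semicontinuity bound $|D\tilde B|(U)\le\liminf|DB_\eps|(U)$. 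Your final sentence, that weak-$*$ convergence of positive measures with no boundary escape ``gives total-variation convergence'', is false as stated (think of $\delta_{1/k}\rightharpoonup^*\delta_0$), and in any case would concern the trace measure rather than $|DB_\eps|$.

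What is actually needed---and what \cite{aa} provides---is a mechanism that ties the \emph{total masses} $|DB_\eps|(\Omega')$ to $|D\tilde B|(\Omega')$ exactly, not merely up to a dimensional constant. In \cite{aa} this comes from the current structure in (ii): $\Psi_\eps$ is a linear diffeomorphism with $\det D\Psi_\eps=1$, so the $\mathcal H^n$-mass of $\Gamma B_\eps$ over a region coincides with that of $\Gamma B$ over the corresponding region, and the current convergence then controls the full area integrand (all minors of $DB_\eps$, not just the trace). From this one extracts the convergence of $|DB_\eps|$ itself. If you want a self-contained argument, this is the missing idea; alternatively, simply cite Theorem~6.2 of~\cite{aa}, as the paper does.
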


\subsection{BV and SBV functions} We conclude the
section by introducing the basic notations related to
the space $SBV$ (for a complete survey on this topic we address the
reader to \cite{afp}).
If $B\in BV(A, R^k)$, then it is possible to split the measure $DB$ into
three mutually singular parts:
$$DB=D_{a}B+D_{j}B+D_{c}B.$$
$D_{a}B$ denotes the absolutely continuous part (with respect to the
Lebesgue measure). $D_{j}B$ denotes the jump part of $D B$.
When $A$ is a $1$-dimensional domain, $D_j B$ consists of
a countable sum of weighted Dirac masses, and hence it is also
called the atomic part of $DB$. In higher dimensional domains, $D_j B$
is concentrated on a rectifiable set of codimension $1$, which corresponds
to the measure-theoretic jump set $J_B$ of $B$.
$D_{c}B$ is called the
Cantor part of the gradient and it is the ``diffused part''
of the singular measure $D_{s}B:=D_{j}B+D_{c}B$. Indeed 
\begin{equation}\label{e:cantor_prop}
D_c B (E) = 0
\qquad\mbox{for any Borel set $E$ with $\mathcal{H}^{n-1} (E)<\infty$.}
\end{equation} 
For all these
statements we refer to Section 3.9 of \cite{afp}.

\begin{defi}
Let $B\in BV(\Omega)$, then $B$ is a special function of bounded
variation, and we write $B\in SBV(\Omega)$, if $D_{c}B=0$, i.e. if the
measure $DB$ has no Cantor part. The more general
space $SBV_{loc} (\Omega)$ is
defined in the obvious way.
\end{defi}

In what follows, when $u$ is a (semi)-concave function, we will
denote by $D^2 u$ the distributinal hessian of $u$. Since $D u$
is, in this case, a $BV$ map, the discussion above applies.
In this case we will use the notation $D^2_a u$, $D^2_j u$ and
$D^2_c u$. An important property of $D^2_c u$ is the
following regularity property.

\begin{prop}\label{p:cont2}
Let $u$ be a (semi)-concave function. If $D$ denotes the set
of points where $\partial u$ is not single--valued, then
$|D^2_c u| (D) =0$.
\end{prop}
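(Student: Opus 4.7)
The plan is to combine the rectifiability of the singular set $D$, which is furnished by Theorem~\ref{t:cont}, with the general vanishing property~\eqref{e:cantor_prop} of Cantor parts on sets with finite $\mathcal{H}^{n-1}$ measure. Since the statement is local, I first reduce to the case where $u$ is concave: pick a compact convex $K\subset\subset\Omega$, write $u(x)=\tilde u(x)+\frac{C_K}{2}|x|^2$ with $\tilde u$ concave on $K$, and observe that $D\cap K$ coincides with $S(\partial\tilde u)\cap K$ and that $D^2_c u$ and $D^2_c\tilde u$ agree on $K$ (they differ by the absolutely continuous measure $C_K\,\mathrm{Id}\,\mathcal{L}^n$). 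Covering $\Omega$ by countably many such $K$, it is enough to prove $|D^2_c u|(D)=0$ under the assumption that $u$ is concave.

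Now apply Theorem~\ref{t:cont} to the maximal monotone function $B=\partial u$: the set $S(B)=D$ is $\mathcal{H}^{n-1}$-rectifiable. As recalled in the proof of that theorem, $D$ is in fact a countable union of rectifiable sets of Hausdorff dimension at most $n-1$, hence has $\sigma$-finite $\mathcal{H}^{n-1}$ measure; equivalently, one can write $D=\bigcup_{k} E_k$ with each $E_k$ a Borel set of finite $\mathcal{H}^{n-1}$ measure.

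To conclude, apply~\eqref{e:cantor_prop} componentwise. Each partial derivative $\partial_{x_i} u$ is a (real-valued) BV function, so the Cantor part of $D(\partial_{x_i} u)$ vanishes on any Borel set of finite $\mathcal{H}^{n-1}$ measure; summing over $i,j$ gives $|D^2_c u|(E_k)=0$ for every $k$, and $\sigma$-additivity yields $|D^2_c u|(D)=0$. The main (mild) obstacle is the bookkeeping: making sure that ``$D$'' refers to the singular set of $\partial u$ as a set-valued map (not of the single-valued representative $Du$), and that the Cantor part of a matrix-valued BV function inherits the scalar vanishing property via its components. Both points are immediate from the standard BV theory recorded in \cite{afp}.
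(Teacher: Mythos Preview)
Your proof is correct and follows essentially the same route as the paper: use Theorem~\ref{t:cont} to get that $D$ is $\mathcal{H}^{n-1}$-rectifiable, hence $\mathcal{H}^{n-1}$-$\sigma$-finite, and then invoke~\eqref{e:cantor_prop} to kill the Cantor part on each piece. Your added bookkeeping (the explicit reduction to concave $u$ and the componentwise application of~\eqref{e:cantor_prop}) is more detailed than the paper's three-line argument but not substantively different.
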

\begin{proof} By Theorem \ref{t:cont}, the set $D$ is $\mathcal{H}^{n-1}$-rectifiable.
This means in particular, that it is $\mathcal{H}^{n-1}-\sigma$ finite. By the property
\eqref{e:cantor_prop} we conclude $D^2_c u (E)=0$ for every Borel subset $E$ of $D$.
Therefore $|D^2_c u| (D) =0$.
\end{proof}

\section{Hamilton-Jacobi equations}

In this section we collect some definitions and well-known results
about Hamilton-Jacobi equations. For a complete survey on this
topic we redirect the reader to the vast literature. 
For an introduction to the topic we suggest the following sources
\cite{bress1},\cite{cansin},\cite{evans}. In this paper we will
consider the following Hamilton-Jacobi equations
\begin{eqnarray}
&&\partial_{t} u+H(D_{x}u)\;=\; 0, \qquad \hbox{in $\Omega \subset
[0,T]\times \R^n $\, ,}
\label{e:HJt}\\
&& H (D_x u) \;=\; 0, \qquad\,\,\,\,\qquad \hbox{in 
$\Omega\subset \R^n$\, ,
}\label{e:HJs}
\end{eqnarray}
under the assumption that
\begin{itemize}
    \item[{\bf A1:}] The Hamiltonian $H\in C^2(\R^n)$ satisfies:
    $$p \mapsto H(p) \textrm{ is convex and }\lim_{|p|\rightarrow \infty} \frac{H(p)}{|p|}=+\infty.$$
\end{itemize}
Note that this assumption is obviously implied by \eqref{convex ineq}.

We will often consider $\Omega= [0,T]\times \R^n$ in \eqref{e:HJt}
and couple it with the initial condition
\begin{equation}\label{e:initial}
u (0,x)\;=\; u_0 (x)
\end{equation}
under the assumption that
\begin{itemize}
\item[{\bf A2:}]
    The initial data $u_{0}:\R^n\rightarrow\R$  is Lipschitz continuous and bounded.
\end{itemize}
\begin{defi}[Viscosity solution]
A bounded, uniformly continuous function $u$ is called a
\emph{viscosity solution} of \eqref{e:HJt} (resp. \eqref{e:HJs})
provided that
\begin{enumerate}
    \item $u$ is a \emph{viscosity subsolution} of \eqref{e:HJt} (resp. \eqref{e:HJs}): for
    each $v\in C^{\infty}(\Omega)$ such that $u-v$
    has a maximum at $(t_{0},x_{0})$ (resp. $x_0$),
    \begin{equation}
    v_{t}(t_{0},x_{0})+H(D_{x}v(t_{0},x_{0}))\leq 0
\qquad \mbox{(resp. $H (D v (x_0))\leq 0$);}
    \end{equation}
    \item $u$ is a \emph{viscosity supersolution} of \eqref{e:HJt} (resp. \eqref{e:HJs}): for
    each $v\in C^{\infty}(\Omega)$ such that $u-v$
    has a minimum at $(t_{0},x_{0})$ (resp. $x_0$),
    \begin{equation}
    v_{t}(t_{0},x_{0})+H(D_{x}v(t_{0},x_{0}))\geq 0
\qquad\mbox{(resp. $H (Dv (x_0))\geq 0$).}
    \end{equation}
\end{enumerate}
In addition, we say that
$u$ solves the Cauchy problem \eqref{e:HJt}-\eqref{e:initial} on
$\Omega = [0,T]\times \R^n$ if \eqref{e:initial} holds in the
classical sense.
\end{defi}
\begin{teo}[The Hopf-Lax formula as viscosity solution]\label{viscosity-theorem}
The unique viscosity solution of the initial-value problem
\eqref{e:HJt}-\eqref{e:initial} is given by the Hopf-Lax formula
\begin{equation}\label{Hopf-Lax}
u(t,x)=\min_{y\in \R^n}
\Big{\{}u_{0}(y)+tL\Big{(}\frac{x-y}{t}\Big{)} \Big{\}} \qquad (t>0,
x\in \R^{n}),
\end{equation}
where $L$ is the Legendre transform of $H$:
\begin{equation}
L(q):=\sup_{p\in \R^{n}} \{p\cdot q- H(p)\} \qquad (q \in \R^{n}).
\end{equation}
\end{teo}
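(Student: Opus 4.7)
The plan is to prove existence by showing directly that the Hopf--Lax function
\[
u(t,x)\;:=\;\min_{y\in\R^n}\Bigl\{u_{0}(y)+tL\bigl((x-y)/t\bigr)\Bigr\}
\]
is a viscosity solution attaining $u_0$ at $t=0$, and then to derive uniqueness from the standard comparison principle for viscosity solutions of Hamilton--Jacobi equations with convex, superlinear Hamiltonian. The overall scheme is classical (see e.g.\ Evans); the main ingredients I would use are (a) the superlinearity of $L$, which follows from assumption {\bf A1} via duality; (b) a semigroup (dynamic programming) identity satisfied by the Hopf--Lax formula; and (c) the comparison principle. The real obstacle is step (b) together with the viscosity verification: one has to extract enough differentiability along minimizers even though $u$ itself may fail to be differentiable.

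First I would record that the minimum in the Hopf--Lax formula is attained. This relies on the superlinearity of $L$: since $L(q)/|q|\to+\infty$ as $|q|\to\infty$, the map $y\mapsto u_0(y)+tL((x-y)/t)$ has compact sublevel sets, so a minimizer $y^*=y^*(t,x)$ exists. Using the Lipschitz bound $\mathrm{Lip}(u_0)$ one then shows that the minimizers range over a bounded set depending only on $\mathrm{Lip}(u_0)$ and $L$, and from this one deduces (i) that $u$ is Lipschitz in $x$ and in $t$ on $[0,T]\times\R^n$, and (ii) that $u(0,x)=u_0(x)$ (using $L(0)=-\min H$ and letting $t\downarrow 0$). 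The second step is the dynamic programming principle: for all $0\le s<t$ and $x\in\R^n$,
\[
u(t,x)\;=\;\min_{z\in\R^n}\Bigl\{u(s,z)+(t-s)L\bigl((x-z)/(t-s)\bigr)\Bigr\}.
\]
This is a direct computation using convexity of $L$ and the change of variables $z=x+\tfrac{s}{t}(y-x)$ in the optimization.

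The viscosity verification proceeds as follows. To show $u$ is a subsolution, assume $\varphi\in C^{\infty}$ and $u-\varphi$ has a local maximum at $(t_0,x_0)$ with $t_0>0$. For any fixed $q\in\R^n$ and small $h>0$, the semigroup identity gives $u(t_0,x_0)\le u(t_0-h,x_0-hq)+hL(q)$, so
\[
\varphi(t_0,x_0)-\varphi(t_0-h,x_0-hq)\;\le\;hL(q).
\]
Dividing by $h$ and letting $h\downarrow 0$ yields $\varphi_t(t_0,x_0)+q\cdot D_x\varphi(t_0,x_0)\le L(q)$; taking the supremum over $q$ and using $H=L^{*}$ gives $\varphi_t+H(D_x\varphi)\le 0$ at $(t_0,x_0)$. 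For the supersolution inequality, one picks the minimizer $y^*$ corresponding to $(t_0,x_0)$ and sets $q^*:=(x_0-y^*)/t_0$; the semigroup identity with the choice $z=x_0-hq^*$ becomes an equality up to $o(h)$, and the same limiting argument combined with the Legendre duality $H(p)=p\cdot q-L(q)$ at the dual point produces $\varphi_t+H(D_x\varphi)\ge 0$ at any minimum point of $u-\varphi$.

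Uniqueness is then a consequence of the standard comparison principle for viscosity sub- and super-solutions of $\partial_t u+H(D_x u)=0$ with bounded continuous initial data. The classical proof (doubling of variables with a quadratic penalization in $(t,s,x,y)$, combined with local Lipschitz continuity of $H$ on bounded sets) applies verbatim under {\bf A1} and {\bf A2}, and gives $u\le v$ whenever $u$ is a subsolution, $v$ is a supersolution, and $u(0,\cdot)\le v(0,\cdot)$. Applied to any two viscosity solutions of \eqref{e:HJt}--\eqref{e:initial}, this forces equality, completing the proof. The subtle point in the whole argument is really the supersolution inequality, because one cannot perturb the minimizer $y^*$ arbitrarily in the semigroup identity; this is where the explicit choice $z=x_0-hq^*$ with $q^*=(x_0-y^*)/t_0$ is essential.
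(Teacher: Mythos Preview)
The paper does not supply its own proof of this theorem: it simply refers the reader to Chapter~6 of Cannarsa--Sinestrari and Chapters~3 and~10 of Evans (see the Remark following Proposition~\ref{properties visc sol}). Your outline is exactly the classical argument presented in those references---superlinearity of $L$ to obtain minimizers and Lipschitz bounds, the dynamic programming (semigroup) identity, verification of the sub- and supersolution inequalities via test functions, and uniqueness by the comparison principle---so there is no discrepancy to discuss.
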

In the next Proposition we collect some properties of the
viscosity solution defined by the Hopf-Lax formula:
\begin{prop}\label{properties visc sol}
Let $u(t,x)$ be the viscosity solution of \eqref{e:HJt}-\eqref{e:initial} and defined by
\eqref{Hopf-Lax}, then
\begin{enumerate}
    \item[(i)] \textbf{A functional identity:} For each $x\in\R^n$ and
    $0\leq s <t \leq T$, we have
    \begin{equation}\label{functional identity}
    u(t,x)=\min_{y\in \R^n}
    \Big{\{}u(s,y)+(t-s)L\Big{(}\frac{x-y}{t-s}\Big{)} \Big{\}}.
    \end{equation}
\item[(ii)] \textbf{Semiconcavity of the solution:} For any
fixed $\tau>0$ there exists a constant $C(\tau)$
such that the function defined by
    \begin{equation}
    u_t:\R^n \rightarrow \R^n 
\textrm{ with } u_t(x):=u(t,x),
    \end{equation}
    is semiconcave with constant less than $C$ for any $t\geq \tau$.
\item[(iii)]
\textbf{Characteristics:} The minimum point $y$ in \eqref{Hopf-Lax}
is unique if and only if $\partial u_t (x)$ is single valued. Moreover,
in this case we have $y=x-t DH(D_x u (t,x))$.
\item[(iv)] \textbf{The linear programming principle:} Let $t>s>0$, $x\in \R^n$ and
assume that $y$ is a minimum for \eqref{Hopf-Lax}. Let $z= 
\frac{s}{t} x + (1-\frac{s}{t}) y$. 
Then $y$ is the {\em unique} minimum for $u_0 (w) + s L
((z-w)/s)$.
\end{enumerate}
\end{prop}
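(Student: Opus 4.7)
The plan is to prove all four items by exploiting the explicit Hopf--Lax formula together with two convex--analytic facts about the Legendre transform $L$: since $H\in C^2$ is convex and superlinear, $L$ is strictly convex and $C^1$ with $DL=(DH)^{-1}$; moreover, since $u_0$ is Lipschitz and bounded, the minimizing $y$ in \eqref{Hopf-Lax} at any point $(t,x)$ in a fixed compact region satisfies $(x-y)/t\in K$ for some compact $K\subset\R^n$, so all estimates reduce to the local behaviour of $L$ on $K$. The main (purely technical) obstacle is to confine every argument to this compact range of velocities, so that strict convexity and quantitative $C^{1,1}$ estimates on $L$ become available.

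For (i), pick a minimizer $y$ for $u(t,x)$ and set $z^\star:=(1-s/t)y+(s/t)x$, so that the ``velocity'' $(z^\star-y)/s=(x-z^\star)/(t-s)=(x-y)/t$ is the same on both subintervals. Then exactly $tL((x-y)/t)=sL((z^\star-y)/s)+(t-s)L((x-z^\star)/(t-s))$, and inserting the obvious bound $u(s,z^\star)\leq u_0(y)+sL((z^\star-y)/s)$ yields one inequality. For the converse, fix any $z$, pick a minimizer $w$ for $u(s,z)$, and use convexity of $L$ to estimate $tL((x-w)/t)\leq sL((z-w)/s)+(t-s)L((x-z)/(t-s))$ (since $(x-w)/t$ is the corresponding convex combination with weights $s/t$ and $(t-s)/t$); this gives $u(t,x)\leq u(s,z)+(t-s)L((x-z)/(t-s))$, and taking the infimum over $z$ closes the step.

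For (ii), given $t\geq\tau$ and $x$, let $y$ minimize $u(t,x)$; using $y$ itself as a non--optimal test point in the Hopf--Lax formulas for $u(t,x\pm h)$ gives
\begin{equation*}
u(t,x+h)+u(t,x-h)-2u(t,x)\;\leq\; t\bigl[L(q+h/t)+L(q-h/t)-2L(q)\bigr],
\end{equation*}
with $q=(x-y)/t$. Since $q$ ranges over a compact set $K$ and $L$ is $C^{1,1}$ on compacts (as the Legendre transform of a $C^2$ strictly convex Hamiltonian, uniformly convex on the compact $DL(K)$), the right hand side is bounded by $C|h|^2/t\leq C|h|^2/\tau$.

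For (iii) and (iv) the common observation is that any minimizer $y$ for $u(t,x)$ provides a $C^1$ test function $\phi(z):=u_0(y)+tL((z-y)/t)$ lying above $u(t,\cdot)$ and touching it at $z=x$; hence $DL((x-y)/t)\in\partial u_t(x)$. Strict convexity of $L$ makes $y\mapsto DL((x-y)/t)$ injective, and a converse argument (using that at points of differentiability $D_x u(t,x)$ must coincide with $DL((x-y)/t)$ for every minimizer) shows every element of $\partial u_t(x)$ arises from a minimizer; thus uniqueness of $y$ is equivalent to $\partial u_t(x)$ being single valued, and inverting via $DL^{-1}=DH$ then gives $y=x-tDH(D_xu(t,x))$. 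For (iv), combining (i) with the equality derived in step (i) forces $u(s,z)=u_0(y)+sL((z-y)/s)$, so $y$ is a minimizer at $(s,z)$; any distinct competitor $y'\neq y$ would produce two different velocity vectors $(z-y')/s\neq(x-z)/(t-s)=(x-y)/t$, and strict Jensen for $L$ would give the contradictory chain $u(t,x)\leq u_0(y')+tL((x-y')/t)<u_0(y')+sL((z-y')/s)+(t-s)L((x-z)/(t-s))=u(s,z)+(t-s)L((x-z)/(t-s))=u(t,x)$.
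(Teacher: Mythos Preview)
Your proof is correct and is essentially the standard argument one finds in the references the paper cites (Evans, Chapter 3; Cannarsa--Sinestrari, Chapter 6); the paper itself gives no proof of this proposition, only the remark pointing to those sources. One minor point: the regularity you invoke for $L$ (namely $C^1$ with $DL=(DH)^{-1}$, and $C^{1,1}$ on compacts) requires $H$ to be strictly, indeed locally uniformly, convex---this follows from the paper's standing assumption \eqref{convex ineq} but not from \textbf{A1} alone, so strictly speaking your arguments for (ii) and (iii) are complete only under \eqref{convex ineq}, which is in any case the only setting in which the proposition is applied.
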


\begin{re}
For a detailed proof of Theorem \ref{viscosity-theorem} and
Proposition \ref{properties visc sol} we address the reader to
Chapter 6 of \cite{cansin} and Chapters 3, 10 of \cite{evans} .
\end{re}

Next, we state a useful locality property of the solutions of \eqref{e:HJt}.

\begin{prop}\label{p:locality}
Let $u$ be a viscosity solution of \eqref{e:HJt} in $\Omega$. Then $u$ is locally
Lipschitz. Moreover, for any $(t_0, x_0)\in \Omega$, there exists a neighborhood
$U$ of $(t_0, x_0)$, a positive number $\delta$ 
and a Lipschitz function $v_0$ on $\R^n$ such that
\begin{itemize}
\item[(Loc)] $u$ coincides on $U$ with the viscosity solution of
\begin{equation}\label{e:(Loc)}
\left\{
\begin{array}{l}
\partial_t v + H (D_x v) \;=\; 0 \qquad \mbox{in $[t_0-\delta, \infty[\times \R^n$}\\ \\
v (t_0-\delta, x) \;=\; v_0(x)\, .
\end{array}\right.
\end{equation}
\end{itemize}
\end{prop}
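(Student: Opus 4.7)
The plan is two-step: (a) establish local Lipschitz regularity of $u$, and (b) construct the Lipschitz initial datum $v_0$ and prove the local identity $u = v$ via a finite-speed-of-propagation argument. For (a), I invoke the classical fact (see Chapter 5 of \cite{cansin}) that under assumption \textbf{A1} every viscosity solution of \eqref{e:HJt} is locally semiconcave in the spatial variable, with semiconcavity constant uniformly bounded on compact subsets of $\Omega$; this yields local Lipschitz regularity in $x$ with locally uniform constant. Combining this with the a.e. identity $\partial_t u = -H(D_x u)$ and the local boundedness of $H\circ D_x u$, one obtains local Lipschitz regularity in $t$ as well, hence on $\Omega$.

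For (b), fix $(t_0, x_0) \in \Omega$ and pick $\delta, R > 0$ so that $Q := [t_0-\delta, t_0+\delta] \times \overline{B_R(x_0)} \subset \Omega$. Let $L$ be a Lipschitz constant of $u$ on $Q$ and $M := \max_{|p|\leq L}|DH(p)|$ (finite by $H \in C^2$). Choose $\sigma \in (0,\delta]$ and $r > 0$ with $r + M(\delta+\sigma) < R$, and set $U := (t_0-\sigma, t_0+\sigma) \times B_r(x_0)$. Define $v_0 : \R^n \to \R$ as a bounded $L$-Lipschitz extension of $u(t_0-\delta,\cdot)\big|_{\overline{B_R(x_0)}}$ (e.g.\ via McShane, followed by truncation to preserve boundedness), and let $v$ be the viscosity solution of \eqref{e:(Loc)}; by Theorem \ref{viscosity-theorem} and Proposition \ref{properties visc sol}, $v$ is given by the Hopf--Lax formula with datum $v_0$.

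To conclude $u \equiv v$ on $U$, note first that for any $(t,x) \in U$ at which $D_xv(t,x)$ exists, Proposition \ref{properties visc sol}(iii) yields a Hopf--Lax minimizer $y_* = x - (t-(t_0-\delta))\, DH(D_x v(t,x))$ with $|y_* - x| \leq M(t-(t_0-\delta))$, hence $y_* \in \overline{B_R(x_0)}$, where $v_0$ coincides with $u(t_0-\delta,\cdot)$ by construction. Thus on $U$ the value $v(t,x)$ only depends on the values of $u(t_0-\delta,\cdot)$ inside $\overline{B_R(x_0)}$. The principal obstacle is to verify that $u$ itself enjoys the same local representation, i.e. that the backward characteristic from $(t,x) \in U$ remains inside $Q$. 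A clean way is to observe that $u$ and $v$ are both viscosity solutions of the same equation on the cylinder $(t_0-\delta, t_0+\sigma) \times B_R(x_0)$, agreeing on the bottom face $\{t_0-\delta\}\times B_R(x_0)$, and then apply the standard viscosity comparison principle on the retracted truncated cone $\{(t,x): t_0-\delta \leq t \leq t_0+\sigma,\ |x-x_0| \leq R - M(t-(t_0-\delta))\}$, inside which $U$ lies. The Lipschitz bound $L$ and the constant $M$ suffice to run the doubling-of-variables argument and conclude $u \equiv v$ on $U$. This cone-of-dependence comparison is classical in spirit but must be set up carefully because $u$ is only defined on $\Omega$; it is the technical heart of the proof.
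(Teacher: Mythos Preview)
Your proof is correct and follows a genuinely different strategy from the paper's in part (b). For the local Lipschitz regularity you argue exactly as the paper does (semiconcavity from \cite{cansin}). For (Loc), however, the paper first establishes a \emph{local Hopf--Lax representation for $u$ itself}: it invokes the dynamic programming principle on the cylinder $C=[t_0-\delta,t_0+\delta]\times\overline{B}_R(x_0)$ (citing \cite{canson,evsoug}) to write
\[
u(t,x)\;=\;\min_{y\in\overline{B}_R(x_0)}\Big(u(t_0-\delta,y)+(t-t_0+\delta)\,L\Big(\tfrac{x-y}{t-t_0+\delta}\Big)\Big),
\]
and then observes that the global Hopf--Lax formula for $v$ reduces to the same expression near $(t_0,x_0)$ because the minimizer stays in $\overline{B}_R(x_0)$. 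You instead bypass any representation formula for $u$ and appeal directly to a cone-of-dependence comparison: both $u$ and $v$ are Lipschitz viscosity solutions on the cylinder, agree on the bottom face, and have spatial gradients bounded by $L$, so a doubling-of-variables argument with a penalty adapted to the cone of slope $M=\sup_{|p|\le L}|DH(p)|$ forces $u\equiv v$ there. Both reductions are classical but neither is entirely self-contained: the paper's relies on a dynamic programming principle for which the authors themselves admit they ``have not been able to find a complete reference,'' while yours relies on the localized finite-speed comparison, which is standard in spirit but must be executed carefully precisely because $u$ is only defined on $\Omega$ (a point you correctly flag). Your Step~1 (locating the Hopf--Lax minimizer for $v$) is in fact redundant for the comparison route, since $|D_xv|\le L$ already follows from $v_0$ being $L$-Lipschitz; it does no harm, but the argument would be cleaner without it. The trade-off: the paper's approach yields an explicit local formula for $u$ as a by-product, whereas yours is more direct and arguably more in the pure viscosity-solutions spirit.
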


This property of viscosity solutions of Hamilton-Jacobi equations is 
obviously related to the finite speed of propagation (which holds
when the solution is Lipschitz) and it is well-known. One could
prove it, for instance, suitably modifying the proof of Theorem 7
at page 132 of \cite{evans}. On the other hand
we have not been able to find a complete reference for Proposition
\ref{p:locality}. Therefore, for the reader's convenience, we provide
a reduction to some
other properties clearly stated in the literature. 

\begin{proof} 
The local Lipschitz regularity of $u$ follows from its local semiconcavity,
for which we refer to \cite{cansin}. As for the locality property
(Loc), we let $\delta>0$ and $R$ be such that $C:=
[t_0-\delta, t_0+\delta]\times 
\overline{B}_R (x_0)\subset \Omega$. It is then known that
the following dynamic programming principle holds for every $(t,x)\in C$
(see for instance Remark 3.1 of \cite{canson}
or \cite{evsoug}):
\begin{eqnarray}
u (t,x) &=& \inf \bigg\{ \int_\tau^t L (\dot{\xi} (s))\, ds 
+ u (\tau, \xi (\tau)) \, \Big|\, \tau\leq t, \xi\in W^{1,\infty} ([\tau, t]),
\label{e:dyn_prog}\\
&&\qquad\qquad\qquad\qquad\qquad\qquad\qquad \xi (t)=x
\mbox{ and } (\tau, \xi (\tau))\in \partial C\bigg\}\, . \nonumber
\end{eqnarray}
The Lipschitz regularity of $u$ and the convexity of $L$
ensure that a minimizer exists. Moreover any minimizer is a straight line.
Next, assume that $x\in B_\delta (x_0)$. If $\delta$ is much smaller than $R$, 
the Lipschitz regularity
of $u$ ensures that any minimizer $\xi$ has the endpoint $(\tau, \xi (\tau))$
lying in $\{t_0-\delta\}\times B_R (x_0)$. Thus,
for every $(t, x)\in [t_0-\delta, t_0+\delta]\times B_\delta (x_0)$
we get the formula
\begin{equation}\label{e:HL_loc}
u(t,x) \;=\; \min_{y\in \overline{B}_R (x_0)}
\left(u(t_0-\delta, y) + (t-t_0+\delta) L \left(\frac{x-y}{t-t_0+\delta}\right)\right)\, .
\end{equation}
Next, extend the map $\overline{B}_R (0)\ni x\mapsto u (t_0-\delta, x)$ to
a bounded Lipschitz map $v_0: \R^n \to \R$, keeping the same Lipschitz constant.
Then the solution of \eqref{e:(Loc)} is given by the Hopf-Lax formula
\begin{equation}\label{e:HL_glob}
v (t,x)\;=\; \min_{y\in \R^n} \left( v_0 (y) + (t-t_0+\delta)
L \left(\frac{x-y}{t-t_0+\delta}\right)\right)\, .
\end{equation}
If $(t, x)\in [t_0-\delta, t_0+\delta]\times B_\delta (0)$,
then any minimum point $y$ in \eqref{e:HL_glob} belongs to $\overline{B}_R (0)$,
provided $\delta$ is sufficiently small (compared to $R$ and the Lipschitz
constant of $v$, which in turn is bounded independently of $\delta$).
Finally, since $v_0 (y) = u (t_0-\delta, y)$ for every $y\in \overline{B}_R (0)$,
\eqref{e:HL_loc} and \eqref{e:HL_glob} imply that $u$ and $v$
coincide on $[t_0-\delta, t_0+\delta]\times B_\delta (0)$ provided $\delta$
is sufficiently small.
\end{proof}

\section{Proof of the main Theorem}

\subsection{Preliminary remarks}
Let $u$ be a viscosity solution of \eqref{e:HJt}. By Proposition
\ref{p:locality} and the time invariance of the equation, we can,
without loss of generality, assume that $u$ is a solution on
$[0,T]\times \R^n$ of the Cauchy-Problem
\eqref{e:HJt}-\eqref{e:initial} under the assumptions A1, A2.
Clearly, it suffices to show that, for every $j>0$, the set of times
$S\cap ]1/j, +\infty[$ is countable. Therefore, by Proposition
\ref{viscosity-theorem} and the time--invariance of 
the Hamilton--Jacobi
equations, we can restrict ourselves to the following case:
\begin{equation}\label{2der bounded}
\mbox{$\exists C$ s.t. $u_{\tau}$ is semiconcave
with constant less than $C$ and $|Du_\tau|\leq C$ $\forall \tau\in [0,T]$}.
\end{equation}
Arguing in the same way, we can further assume that
\begin{equation}\label{e:eps}
\mbox{$T$ is smaller than some constant $\eps (C)>0$,}
\end{equation}
where the choice of the constant $\eps (C)$ will be specified later.

Next we consider a ball $B_R (0)\subset \R^n$ and a bounded
convex set $\Omega\subset [0,T]\times \R^n$ with the properties
that:
\begin{itemize}
\item $B_R (0)\times \{s\}\subset \Omega$ for every $s\in [0,T]$;
\item For any $(t,x)\in \Omega$ and for any $y$ reaching the
minimum in the formulation \eqref{Hopf-Lax}, $(0,y)\in \Omega$
(and therefore the entire segment joining $(t,x)$ to $(0,y)$
is contained in $\Omega$).
\end{itemize}
Indeed, recalling that $\|Du\|_\infty <\infty$, it suffices 
to choose $\Omega:= \{(x,t)\in \R^n\times [0,T]: |x|\leq R + C' (T-t)\}$
where the costant $C'$ is sufficiently large,
depending only on $\|Du\|_\infty$ and $H$.  
Our goal is now to show
the countability of the set $S$ in \eqref{e:exceptional}.

\subsection{A function depending on time}
For any $s<t\in [0,T]$, we define the set--valued map
\begin{equation}\label{e:X}
X_{t,s} (x) \;:=\; x- (t-s) DH (\partial u_t (x))\, .
\end{equation}
Moreover, we will denote by $\chi_{t,s}$ the restriction
of $X_{t,s}$ to the points where $X_{t,s}$ is single--valued.
According to Theorem \ref{t:cont} and Proposition
\ref{properties visc sol}(iii),
the domain of $\chi_{t,s}$ consists of
those points where $Du_t (\cdot)$ is continuous, which are those
where the minimum point $y$ in \eqref{functional identity}
is unique. Moreover, in this case we have $\chi_{t,s} (x) = \{y\}$.

Clearly, $\chi_{t,s}$ is defined a.e. on $\Omega_t$. With 
a slight abuse of notation we set
\begin{equation}
F(t)\;:=\; |\chi_{t,0} (\Omega_t)|\, ,
\end{equation}
meaning that, if we denote by $U_t$ the set of points $x\in
\Omega_{t}$ such that \eqref{Hopf-Lax} has a unique minimum point,
we have $F(t) = |X_{t,0} (U_t)|$.

The proof is then split in the following three lemmas:

\begin{lem}\label{nonincreasing}
The functional $F$ is nonincreasing,
\begin{equation}
F(\sigma)\geq F(\tau)  \qquad\mbox{for any $\sigma, \tau
\in [0,T]$ with $\sigma<\tau$}.
\end{equation}
\end{lem}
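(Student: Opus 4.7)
The plan is to establish the stronger set-theoretic inclusion
\[
\chi_{\tau,0}(U_\tau) \;\subseteq\; \chi_{\sigma,0}(U_\sigma)
\qquad\text{for every } 0\leq\sigma<\tau\leq T,
\]
where $U_t\subseteq \Omega_t$ denotes the set of points where the Hopf--Lax formula \eqref{functional identity} (equivalently, \eqref{Hopf-Lax}) has a unique minimum. Once this inclusion is proved, monotonicity of Lebesgue measure yields $F(\tau)\leq F(\sigma)$ directly, so the entire content of the lemma is concentrated in the inclusion.

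The inclusion is a direct application of the linear programming principle, Proposition \ref{properties visc sol}(iv). Given $y\in\chi_{\tau,0}(U_\tau)$, pick $x\in U_\tau$ with $\chi_{\tau,0}(x)=y$; by definition $y$ is the \emph{unique} minimum of the Hopf--Lax functional at $(\tau,x)$. Set
\[
z \;:=\; \tfrac{\sigma}{\tau}\,x + \bigl(1-\tfrac{\sigma}{\tau}\bigr)\,y .
\]
Part (iv) then asserts that $y$ is the unique minimum of $u_0(w)+\sigma L((z-w)/\sigma)$, i.e.\ $\chi_{\sigma,0}(z)=y$. To close the argument I only need to verify that $z\in\Omega_\sigma$, so that $z$ belongs to the domain of $\chi_{\sigma,0}$: this is exactly what the construction of $\Omega$ in the Preliminary Remarks provides. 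Indeed $(\tau,x)\in\Omega$ and, by the second bullet in the construction, $(0,y)\in\Omega$; by convexity of $\Omega$ the convex combination $(\sigma,z)$ also lies in $\Omega$, hence $z\in\Omega_\sigma$. Therefore $z\in U_\sigma$ and $y\in\chi_{\sigma,0}(U_\sigma)$, as required. The edge case $\sigma=0$ is trivial since $\chi_{0,0}$ is the identity on $U_0=\Omega_0$ (modulo a negligible set) and $\chi_{\tau,0}(U_\tau)\subseteq \Omega_0$ again by the construction of $\Omega$.

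I do not expect any real obstacle in this argument: the linear programming principle is tailor-made for propagating unique minimizers backwards in time, and the convexity of $\Omega$ (carefully chosen precisely so that backward characteristics stay inside) removes the only potential domain issue. The one minor point worth mentioning is measurability of $\chi_{t,0}(U_t)$: since $U_t$ is the complement in $\Omega_t$ of the rectifiable singular set $S(\partial u_t)$ (Theorem \ref{t:cont}) and $\chi_{t,0}$ is continuous on $U_t$ (Corollary \ref{c:contconc}), $\chi_{t,0}(U_t)$ is analytic, hence Lebesgue measurable, and the comparison of measures makes sense.
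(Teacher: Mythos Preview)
Your proof is correct and follows essentially the same route as the paper's: both establish the inclusion $\chi_{\tau,0}(U_\tau)\subseteq \chi_{\sigma,0}(U_\sigma)$ by taking $y=\chi_{\tau,0}(x)$, setting $z=\tfrac{\sigma}{\tau}x+(1-\tfrac{\sigma}{\tau})y$, invoking the linear programming principle (Proposition~\ref{properties visc sol}(iv)) to conclude $y=\chi_{\sigma,0}(z)$, and using the convex construction of $\Omega$ to ensure $z\in\Omega_\sigma$. Your additional remarks on the $\sigma=0$ case and on measurability are not in the paper but do no harm.
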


\begin{lem}\label{cantorlem}
If $\eps$ in \eqref{e:eps} is small enough, then the following holds.
For any $t\in]0,T[$ and $\delta\in ]0, T-t]$
there exists a Borel set $E \subset \Omega_{t}$ such that
\begin{itemize}
\item[(i)] $|E|=0$, and
$|D^2_c u_t| (\Omega_t\setminus E)=0$;
\item[(ii)] $X_{t,0}$ is single valued on $E$
(i.e. $X_{t,0} (x) = \{\chi_{t,0} (x)\}$ for every $x\in E$);
\item[(iii)] and
\begin{equation}\label{cantor disappear from X}
\chi_{t,0} (E) \cap \chi_{t+\delta, 0}
(\Omega_{t+\delta})=\emptyset.
\end{equation}
\end{itemize}
\end{lem}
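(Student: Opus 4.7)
The plan is to construct $E$ by removing, from a singular support set of $|D^2_c u_t|$, those points whose backward image at time $0$ is still reached at time $t+\delta$. Since $|D^2_c u_t|$ is singular with respect to Lebesgue and, by Proposition \ref{p:cont2}, concentrated on $U_t := \Omega_t \setminus S(\partial u_t)$, I would fix a Lebesgue-null Borel set $N \subset U_t$ with $|D^2_c u_t|(\Omega_t \setminus N) = 0$ and set
$$
W \;:=\; \{y \in U_t \,:\, \chi_{t,0}(y) \in \chi_{t+\delta,0}(\Omega_{t+\delta})\}, \qquad E \;:=\; N \setminus W.
$$
Properties (ii) and (iii) are then immediate, and (i) reduces to the single claim $|D^2_c u_t|(W) = 0$.

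The heart of the proof is to show that $Du_t$ restricted to $W$ is Lipschitz, with a constant $K_\delta$ depending only on $C$, $c_H$ and $\delta$. For $y_1,y_2\in W$ set $p_i := Du_t(y_i)$ (single-valued since $W\subset U_t$) and $x'_i := y_i + \delta\, DH(p_i) \in \Omega_{t+\delta}$. By the linear programming principle (Proposition \ref{properties visc sol}(iv)), each $y_i$ lies on the characteristic joining $(0,\chi_{t,0}(y_i))$ to $(t+\delta,x'_i)$ at constant slope $p_i$, so $p_i\in\partial u_{t+\delta}(x'_i)$ even when $x'_i$ is a singular point of $\partial u_{t+\delta}$. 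Combining the supergradient form of semiconcavity for $u_{t+\delta}$ (Proposition \ref{concave proposition}),
$\langle p_1-p_2,\,x'_1-x'_2\rangle\le C|x'_1-x'_2|^2$,
the uniform-convexity lower bound $\langle DH(p_1)-DH(p_2),\,p_1-p_2\rangle\ge c_H^{-1}|p_1-p_2|^2$, and the identity
$$
x'_1-x'_2 \;=\; (y_1-y_2) + \delta\bigl(DH(p_1)-DH(p_2)\bigr), \qquad |DH(p_1)-DH(p_2)|\le c_H|p_1-p_2|,
$$
a Cauchy--Schwarz estimate leads to a quadratic inequality of the form
$$
(\delta c_H^{-1} - 2C\delta^2 c_H^2)\,|p_1-p_2|^2 \;\le\; |p_1-p_2|\,|y_1-y_2| + 2C|y_1-y_2|^2.
$$
Provided $\delta$ is small enough that the leading coefficient is positive --- which is guaranteed by the smallness assumption \eqref{e:eps} on $T$, by choosing $\eps(C)$ smaller than $(2Cc_H^3)^{-1}$ --- this forces $|p_1-p_2|\le K_\delta\,|y_1-y_2|$.

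To transfer the Lipschitz estimate into a vanishing statement for the Cantor part, I would extend $Du_t|_W$ to a globally Lipschitz map $g:\R^n\to\R^n$ via McShane's theorem. Since $g$ is Lipschitz, Rademacher's theorem gives that $Dg$ is absolutely continuous with respect to Lebesgue, so $(Dg)_c=0$. On $W\subset U_t$ the precise representative of $Du_t$ is continuous (Theorem \ref{t:cont}) and agrees with $g$; the standard locality of the Cantor part for BV functions (the Cantor part of the derivative vanishes on any Borel set on which the precise representative coincides with a Lipschitz function; cf.\ Ambrosio--Fusco--Pallara) then yields $|D^2_c u_t|(W) = |(Dg)_c|(W) = 0$, which closes the argument.

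The main obstacle is the Lipschitz estimate on $W$. Its delicate point is that the forward endpoints $x'_i$ need not lie in $U_{t+\delta}$: characteristics surviving until time $t$ may well collide by time $t+\delta$, so one cannot use the pointwise version of semiconcavity for $u_{t+\delta}$. The use of the supergradient formulation, combined with the \emph{two-sided} bound $c_H^{-1}\mathrm{Id}\le D^2H\le c_H\mathrm{Id}$, is exactly what makes the argument go through; the balance between the semiconcavity constant $C$ and $c_H$ is what imposes the smallness of $\delta$, and hence of $T$ in \eqref{e:eps}.
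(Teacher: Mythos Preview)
Your proof is correct and takes a genuinely different route from the paper's.

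The paper argues by contradiction and relies on the two volume estimates, Lemma~\ref{ineqlem1} and Lemma~\ref{ineqlem2}, whose proofs pass through the Hille--Yosida approximation and the convergence of the associated currents. Assuming there is a compact $K\subset E$ with $|D^2_c u_t|(K)>0$ and $X_{t,0}(K)\subset\chi_{t+\delta,0}(\Omega_{t+\delta})$, the paper uses the linear programming principle and the injectivity of $X_{t,0}$ to show $X_{t+\delta,t}(\tilde K)\subset K$, then combines \eqref{ineqlem3} and \eqref{ineqq} to force $\int_K d\Delta_s u_t\geq 0$; semiconcavity and $-\Delta_c u_t\geq|D^2_c u_t|$ then give the contradiction. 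Your argument is more direct and entirely elementary: you transport the semiconcavity of $u_{t+\delta}$ back along the surviving characteristics to obtain a Lipschitz bound for $Du_t$ on $W$, and then invoke the locality of the Cantor part for BV functions. In particular your proof does not use Lemma~\ref{ineqlem1} at all, which is a real simplification; Lemma~\ref{ineqlem2} is still needed elsewhere (Subsection~\ref{ss:main}), but not here.

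Two small points worth tightening. First, your justification of $p_i\in\partial u_{t+\delta}(x'_i)$ is a bit elliptic: Proposition~\ref{properties visc sol}(iv) only propagates minimizers \emph{backward}, so one should start from the point $x\in U_{t+\delta}$ with $\chi_{t+\delta,0}(x)=\chi_{t,0}(y_i)$ (which exists by definition of $W$), apply (iv) to get $z$ at time $t$ with $\chi_{t,0}(z)=\chi_{t,0}(y_i)$, and then use the injectivity of $X_{t,0}$ from Proposition~\ref{p:inj} to conclude $z=y_i$ and hence $x=x'_i$. In particular $x'_i$ always lies in $U_{t+\delta}$, so your caveat about singular points is not actually needed. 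Second, the Borel measurability of $W$ (hence of $E$) is not addressed; it follows e.g.\ from the Lusin--Souslin theorem, since $\chi_{t+\delta,0}$ is continuous and injective on the Borel set $U_{t+\delta}$, so that $\chi_{t+\delta,0}(\Omega_{t+\delta})$ is Borel.
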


\begin{lem}\label{ineqlem2}
If $\eps$ in \eqref{e:eps} is small enough, then the following holds.
For any $t\in ]0,\varepsilon]$ and any Borel set 
$E\subset \Omega_{t}$, we have
\begin{equation}\label{ineqq}
|X_{t,0} (E)| \geq c_{0} |E|-c_1 t \int_{E}
d(\Delta u_{t})\, ,
\end{equation}
where $c_{0}$ and $c_1$ are positive constants
and $\Delta u_{t}$
is the Laplacian of $u_{t}$.
\end{lem}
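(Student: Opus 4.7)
My plan is to work with the Moreau--Yosida regularization $(u_t)_\eps$ of $u_t$, for which the analogue of $\chi_{t,0}$ is Lipschitz; apply the area formula; derive a pointwise Jacobian lower bound; and pass to the limit using Theorem~\ref{convergence teo}. The pointwise Jacobian estimate will be the main technical obstacle because $D^2(u_t)_\eps$ can have very negative eigenvalues (as low as $-1/\eps$).

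\textbf{Regularization and injectivity.} Set $(u_t)_\eps$ equal to the inf-convolution \eqref{e:esplicita}, so that $D(u_t)_\eps=B_\eps$ is the Hille--Yosida approximation of $B=\partial u_t$. Then $(u_t)_\eps\in C^{1,1}$, inherits the semiconcavity constant $C$ of $u_t$, and so $D^2(u_t)_\eps\le CI$ almost everywhere. Consider the Lipschitz map $\chi^\eps(x):=x-tDH(D(u_t)_\eps(x))$. I first show that $\chi^\eps$ is injective for $t$ small: if $\chi^\eps(x_1)=\chi^\eps(x_2)$ then $x_1-x_2=t(DH(p_1)-DH(p_2))$ with $p_i:=D(u_t)_\eps(x_i)$; taking the scalar product with $p_1-p_2$ and combining the uniform convexity of $H$ (so $\langle DH(p_1)-DH(p_2),p_1-p_2\rangle\ge c_H^{-1}|p_1-p_2|^2$), the semiconcavity of $(u_t)_\eps$ (so $\langle p_1-p_2,x_1-x_2\rangle\le C|x_1-x_2|^2$), and the Lipschitz bound $|x_1-x_2|\le tc_H|p_1-p_2|$, produces $|x_1-x_2|^2\le tCc_H^3|x_1-x_2|^2$, which forces $x_1=x_2$.

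\textbf{Pointwise Jacobian bound.} Let $M:=(D^2H)^{1/2}D^2(u_t)_\eps(D^2H)^{1/2}$, whose eigenvalues $\mu_1,\dots,\mu_n$ satisfy $\mu_i\le Cc_H$ (from $M\le Cc_HI$). For $t\le 1/(2nCc_H)$, all factors $1-t\mu_i$ are positive and the area formula yields $|\chi^\eps(E)|=\int_E\det(I-tM)(x)\,dx$. Splitting $\prod_i(1-t\mu_i)$ by the sign of $\mu_i$ and applying the Weierstrass product inequality on each factor gives
\[
\det(I-tM)\;\ge\;\bigl(1-t\,\mathrm{tr}(M_+)\bigr)\bigl(1+t\,\mathrm{tr}(M_-)\bigr).
\]
I then use $\mathrm{tr}(M_+)\le nCc_H$ together with the Weyl--Sylvester min-max inequality $\mathrm{tr}(M_-)\ge c_H^{-1}\mathrm{tr}((D^2(u_t)_\eps)_-)\ge -c_H^{-1}\Delta(u_t)_\eps$, which is where uniform convexity of $H$ enters quantitatively (via $D^2H\ge c_H^{-1}I$ and preservation of signature under congruence). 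This gives the pointwise estimate $\det(I-tM)(x)\ge(1-tnCc_H)\bigl(1-tc_H^{-1}\Delta(u_t)_\eps(x)\bigr)$; integrating and rearranging for $t\le 1/(2nCc_H)$ yields
\[
|\chi^\eps(E)|\;\ge\;\tfrac{1}{2}|E|-\tfrac{t}{2c_H}\int_E\Delta(u_t)_\eps(x)\,dx.
\]

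\textbf{Passage to the limit $\eps\downarrow 0$.} By Theorem~\ref{convergence teo}(iii) applied componentwise to $B=Du_t$, $D^2(u_t)_\eps\rightharpoonup^* D^2 u_t$ as matrix-valued Radon measures, hence $\int_E\Delta(u_t)_\eps(x)\,dx\to\int_E d\Delta u_t$ (first for Borel $E$ with $|\Delta u_t|(\partial E)=0$, then in general by inner regularity). On the left-hand side, the graph convergence in Theorem~\ref{convergence teo}(ii)---via the corollary of Subsection~\ref{ss:approx}---yields $\liminf_{\eps\to 0}|\chi^\eps(E)|\ge|X_{t,0}(E)|$. This proves \eqref{ineqq} with $c_0=1/2$ and $c_1=1/(2c_H)$, provided $\eps$ in \eqref{e:eps} is no larger than $1/(2nCc_H)$. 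The crux remains the Weyl--Sylvester step of the previous paragraph: one cannot Taylor-expand $\det(I-tM)$ since eigenvalues of $D^2(u_t)_\eps$ reach $-1/\eps$, so the argument relies on the product structure $\prod(1-t\mu_i)$ and on the fact that negative eigenvalues only enlarge the determinant.
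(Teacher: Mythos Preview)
Your approach coincides with the paper's: Moreau--Yosida regularize $u_t$, apply the area formula to the resulting Lipschitz map $x\mapsto x-tDH(Dv_\eta)$, prove a pointwise lower bound on the Jacobian, and pass to the limit via the weak-$*$ convergences of Subsection~\ref{ss:approx} and Theorem~\ref{convergence teo}(iii). Your injectivity step is exactly Proposition~\ref{p:inj}.

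The only substantive difference is the Jacobian estimate. The paper first factors
\[
\det\big(I-tD^2H\,D^2v_\eta\big)=\det(D^2H)\,\det\big([D^2H]^{-1}-tD^2v_\eta\big),
\]
uses $[D^2H]^{-1}\ge c_H^{-1}I$ (and the monotonicity of $\det$ on positive matrices) to reduce to $c_H^{-n}\det(c_H^{-1}I-tD^2v_\eta)$, and then writes this as $c_H^{-n}\det(\tfrac{1}{2c_H}I+M)$ with $M:=\tfrac{1}{2c_H}I-tD^2v_\eta\ge 0$. At that point $\prod(1+a_i)\ge 1+\sum a_i$ with all $a_i\ge 0$ gives the trace bound immediately, with no sign splitting and no need for an inequality on $\mathrm{tr}(M_-)$. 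Your route through the $(D^2H)^{1/2}$-conjugate and the bound $\mathrm{tr}(M_-)\ge c_H^{-1}\mathrm{tr}((D^2(u_t)_\eps)_-)$ is correct (it follows, for instance, from the variational identity $\mathrm{tr}(M_-)=\max\{-\mathrm{tr}(MB):0\le B\le I\}$ applied with $B=c_H^{-1}P^{-1}\Pi_-^AP^{-1}$), but it is the harder path to the same place.

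There is one genuine wrinkle in your limit step. The assertion $\liminf_{\eps\to 0}|\chi^\eps(E)|\ge|X_{t,0}(E)|$ does \emph{not} follow from the weak-$*$ convergence of Subsection~\ref{ss:approx} for an arbitrary Borel set (it holds for open $E$, while for compact $E$ the inequality goes the other way), and your companion convergence $\int_E\Delta(u_t)_\eps\to\int_E d\Delta u_t$ also needs a boundary condition. The ``inner regularity'' patch does not repair this, since on compacta the $\liminf$ inequality is reversed. The clean fix---and this is what the paper does---is to read your regularized estimate as an inequality of Radon measures,
\[
\mu_\eps\;\ge\; c_0\,\mathcal{L}^n\;-\;c_1 t\,\Delta(u_t)_\eps,
\]
and pass it directly through the weak-$*$ limits $\mu_\eps\rightharpoonup^*\mu$ and $\Delta(u_t)_\eps\rightharpoonup^*\Delta u_t$ by testing against nonnegative $\varphi\in C_c$. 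The inequality for every Borel $E$ then follows automatically. (A small side remark: you are using $\eps$ both for the regularization parameter and for the time bound of \eqref{e:eps}; the paper uses $\eta$ for the former.)
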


\subsection{Proof of Theorem \ref{main theo}}\label{ss:main}
The three key lemmas stated above will be proved in the next two sections.
We now show how to complete the proof of the Theorem.
First of all, note that $F$ is a bounded function. Since
$F$ is, by Lemma \ref{nonincreasing}, a monotone function,
its points of discontinuity are, at most, countable.
We claim that,
if $t\in ]0,T[$ is such that $u_t\not\in SBV_{loc} (\Omega_t)$,
then $F$ has a discontinuity at $t$.

Indeed, in this case we have
\begin{equation}\label{e:noSBV}
|D^2_{c}u_{t}| (\Omega_{t})> 0.
\end{equation}
Consider any $\delta>0$ and let $B=E$ be the set of
Lemma \ref{cantorlem}. Clearly, by Lemma \ref{cantorlem}(i) and (ii),
\eqref{cantor disappear from X} and \eqref{ineqq},
\begin{equation}\label{e:quasi}
F(t+\delta)\;\leq\; F(t) + c_1 t \int_E d\, \Delta_s u_t \;\leq\; F(t) +
c_1 t \int_{\Omega_t} d\, \Delta_c u_t\, ,
\end{equation}
where the last inequality follows from
$\Delta_s u_t = \Delta_c u_t + \Delta_j u_t$
and $\Delta_j u_t\leq 0$ (because of the semiconcavity of $u$).

Next, consider the Radon--Nykodim decomposition $D^2_c u_t = M
|D^2_c u_t|$, where $M$ is a matrix--valued Borel function with
$|M|=1$. Since we are dealing with second derivatives, $M$ is
symmetric, and since $u_t$ is semiconcave, $M\leq 0$. Let
$\lambda_1, \ldots, \lambda_n$ be the eigenvalues of $- M$. Then
$1=|M|^2 = \lambda_1^2 + \ldots + \lambda_n^2$ and $- Tr M =
\lambda_1+\ldots + \lambda_n$. Since $\lambda_i\geq 0$, we easily
get $- Tr M \geq 1$. Therefore,
\begin{equation}\label{e:CS}
-\Delta_{c}u_{t}\;=\; - Tr M |D^2_c u_t| \;\geq\; |D^2_c u_t|\, .
\end{equation}
Hence
$$
F(t+\delta)\;\stackrel{\eqref{e:quasi}+\eqref{e:CS}}{\leq}\; F(t) -
c_1 t|D^2_c u_t| (\Omega_t)\, .
$$
Letting $\delta\downarrow 0$ we conclude
$$
\limsup_{\delta \downarrow 0} F(t+\delta)\;<\; F(t)\, .
$$
Therefore $t$ is a point of discontinuity of $F$, which is the desired
claim.

\subsection{Easy corollaries}
The conclusion that $D_x u \in SBV (\Omega)$ follows from the slicing
theory of $BV$ functions (see Theorem 3.108 of \cite{afp}).
In order to prove the same property for
$\partial_t u$ we apply the Volpert chain rule
to $\partial_t u = - H (D_x u)$. According to Theorem 3.96 of
\cite{afp}, we conclude that $[\partial_{x_j t}]_c u = 
- \sum_i \partial_i H (D_x u) [\partial_{x_jx_i}]_c u = 0$ (because $[D^2_x]_c u = 0$)
and $[\partial_{tt}]_c u = - \sum_i \partial_i H (D_x u) [\partial_{x_i t}]_c u = 0$
(because we just concluded $[D^2_{x t}]_c u =0$).

As for Corollary \ref{corollary1}, let $u$ be a viscosity solution of
\eqref{e:HJs} and set $\widetilde{u}(t,x):=u(x)$. Then $\tilde{u}$
is a viscosity solution of
$$
\partial_{t}\widetilde{u}+H(D_{x}\widetilde{u})=0\,
$$
in $\R\times \Omega$.
By our main Theorem \ref{main theo} the set of
times for which $D_x \widetilde{u}(t,.)\notin SBV_{loc}(\Omega)$ is
at most countable. Since $D_x \widetilde{u} (t, \cdot) = D u$,
for every $t$, we conclude that $Du \in
SBV_{loc}(\Omega)$.

\begin{re}
The special case of this Corollary for $\Omega\subset \R^2$ was
already proved in \cite{adl} (see Corollary 1.4 therein).
We note that the
proof proposed in \cite{adl} was more complicated than the one above.
This is due to the power of Theorem \ref{main theo}.
In \cite{adl} the authors proved the $1$--dimensional case
of Theorem \ref{main theo}. The proof above reduces the $2$--dimensional
case of Corollary \ref{corollary1} to the $2+1$ case of Theorem
\ref{main theo}. In \cite{adl} the $2$-dimensional case of Corollary
\ref{corollary1} was reduced to the $1+1$ case of Theorem
\ref{main theo}: this reduction requires a subtler argument.
\end{re}

\section{Estimates}

In this section we prove two important estimates. The first
is the one in Lemma \ref{ineqlem2}. The second is an estimate
which will be useful in proving Lemma \ref{cantorlem} and
will be stated here.

\begin{lem}\label{ineqlem1}
If $\eps (C)$ in \eqref{e:eps} is sufficiently small,
then the following holds.
For any $t\in ]0,T]$, any $\delta \in [0,t]$ and any Borel set
$E\subset \Omega_{t}$ we have
\begin{equation}\label{ineqlem3}
\Big{|}X_{t, \delta} (E)\Big{|}\geq \frac{(t-\delta)^n}{t^n}
 \Big{|}X_{t,0} (E)\Big{|}\, .
\end{equation}
\end{lem}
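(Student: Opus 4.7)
The starting point is the identity, valid on $U_t$ (the full-measure set where $\partial u_t$ is single-valued),
$$\chi_{t,\delta}(x) \;=\; \frac{\delta}{t}\,x + \frac{t-\delta}{t}\,\chi_{t,0}(x) \;=\; (1-\rho)\,x + \rho\,\chi_{t,0}(x),\qquad \rho:=\frac{t-\delta}{t},$$
so that $\chi_{t,\delta}=(1-\rho)\,Id + \rho\,\chi_{t,0}$; up to null sets one may assume $E\subset U_t$. The plan is to approximate $u_t$ by its Hille--Yosida smoothing, prove the inequality in the smooth setting via the area formula, and then pass to the limit using the current-convergence machinery of Section~2.

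More precisely, I would set $\tilde u_t:=u_t-\frac{C}{2}|x|^2$ (concave by semiconcavity), let $\tilde u_t^\eps$ be its Hille--Yosida approximation from Theorem \ref{convergence teo}(i), and put $u_t^\eps:=\tilde u_t^\eps+\frac{C}{2}|x|^2$. Then $u_t^\eps$ is $C^{1,1}$, semiconcave with the same constant $C$, and $Du_t^\eps\to Du_t$ a.e.\ on $U_t$. Define the Lipschitz map $\chi_{t,s}^\eps(x):=x-(t-s)DH(Du_t^\eps(x))$. The first key step is to show that for $\eps(C)$ sufficiently small, depending only on $C$ and $c_H$, the map $\chi_{t,s}^\eps$ is injective on $\R^n$ for every $s\in[0,t]$. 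Assuming $\chi_{t,s}^\eps(x_1)=\chi_{t,s}^\eps(x_2)$ and writing $p_i:=Du_t^\eps(x_i)$, I would pair the identity $x_1-x_2=(t-s)[DH(p_1)-DH(p_2)]$ with $p_1-p_2$: semiconcavity bounds the left side by $C|x_1-x_2|^2$, uniform convexity of $H$ bounds the right side below by $(t-s)c_H^{-1}|p_1-p_2|^2$, and the Lipschitz estimate $|x_1-x_2|\leq (t-s)c_H|p_1-p_2|$ combines with these to force $t-s\geq (Cc_H^3)^{-1}$, a contradiction for $t\leq\eps(C)<(Cc_H^3)^{-1}$ unless $x_1=x_2$.

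The second step is the pointwise Jacobian inequality. With $A:=D^2H(Du_t^\eps)^{1/2}$, the matrix $D^2H\cdot D^2u_t^\eps$ is similar (via conjugation by $A$) to the symmetric $M:=A\,D^2u_t^\eps\,A$, and $D^2u_t^\eps\leq CI$ gives $M\leq CA^2\leq Cc_HI$, so the eigenvalues of $M$ satisfy $\mu_i\leq Cc_H$. Shrinking $\eps(C)$ further so that $tCc_H<1$, we obtain $1-(t-s)\mu_i>0$ for all $s\in[0,t]$ and
$$\det D\chi_{t,s}^\eps \;=\; \prod_{i=1}^n\bigl(1-(t-s)\mu_i\bigr)\;>\;0.$$
The elementary inequality $(1-(t-\delta)\mu_i)/(1-t\mu_i)\geq \rho$ (equivalent to $1\geq\rho$, using $1-t\mu_i>0$) then yields $\det D\chi_{t,\delta}^\eps\geq \rho^n\det D\chi_{t,0}^\eps$ pointwise. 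Combining with the area formula for the injective Lipschitz maps $\chi_{t,s}^\eps$ gives
$$|\chi_{t,\delta}^\eps(E)| \;=\; \int_E\det D\chi_{t,\delta}^\eps\,dx \;\geq\; \rho^n\int_E \det D\chi_{t,0}^\eps\,dx \;=\; \rho^n\,|\chi_{t,0}^\eps(E)|.$$

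The final step is to let $\eps\to 0$. The convergence $\Gamma(Du_t^\eps)\to \Gamma(Du_t)$ in the sense of currents (Theorem \ref{convergence teo}(ii)), together with the corollary announced in Subsection \ref{ss:approx}, should give $|\chi_{t,s}^\eps(E)|\to |\chi_{t,s}(E)|$ for $s=0$ and $s=\delta$, yielding the lemma. The main obstacle is precisely this last step: turning convergence of the graphs (as integer rectifiable currents) into convergence of the Lebesgue measures of their projections to the image requires the Alberti--Ambrosio machinery and the injectivity of the limiting map $\chi_{t,\delta}$ on $U_t$ (supplied by Proposition~\ref{properties visc sol}(iv)).
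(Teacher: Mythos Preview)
Your proposal follows essentially the same route as the paper: Hille--Yosida approximation of $u_t$, injectivity of the approximate backward maps (this is Proposition~\ref{p:inj} in the paper), a pointwise Jacobian comparison via the area formula, and passage to the limit through the current convergence of Subsection~\ref{ss:approx}. Your eigenvalue computation for $\det D\chi_{t,\delta}^\eps\geq\rho^n\det D\chi_{t,0}^\eps$ is equivalent to the paper's argument, which writes $A-(t-\delta)B=\frac{\delta}{t}A+\rho(A-tB)\geq\rho(A-tB)$ (with $A=[D^2H]^{-1}$, $B=D^2u_t^\eps$) and then invokes determinant monotonicity for positive semidefinite matrices.

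The one point that needs correcting is the limit step. What Subsection~\ref{ss:approx} actually delivers is weak$^*$ convergence of the \emph{measures} $E\mapsto|\chi_{t,s}^\eps(E)|$ to $E\mapsto|X_{t,s}(E)|$ (the latter built from the full set-valued map $X_{t,s}$); it does \emph{not} give $|\chi_{t,s}^\eps(E)|\to|\chi_{t,s}(E)|$ for a fixed Borel $E$. This is still enough: your smooth estimate is an inequality of nonnegative measures, and such inequalities pass under weak$^*$ limits, yielding $|X_{t,\delta}(\cdot)|\geq\rho^n|X_{t,0}(\cdot)|$ as measures and hence~\eqref{ineqlem3} for every Borel $E$. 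Relatedly, your opening reduction ``up to null sets one may assume $E\subset U_t$'' is not safe, since the set-valued $X_{t,s}$ can map null sets to sets of positive measure; working with the measures $\mu$, $\hat\mu$ built from $X_{t,s}$ (as the paper does) sidesteps this issue automatically.
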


\subsection{Injectivity}
In the proof of both lemmas, the following remark
plays a fundamental role.

\begin{prop}\label{p:inj} For any $C>0$
there exists $\eps (C)>0$ with the following property. If $v$ is a
semiconcave function with constant less than $C$, then the map
$x\mapsto x-t DH (\partial v)$ is injective for every $t\in [0, \eps
(C)]$.
\end{prop}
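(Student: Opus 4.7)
The plan is to reduce the statement to the standard monotonicity inequality for the supergradient of a concave function, and then exploit it together with the uniform convexity of $H$ given by \eqref{convex ineq}. First I would set $\tilde v(x) := v(x) - \frac{C}{2}|x|^2$, which is concave because $v$ is semiconcave with constant at most $C$; by the convention fixed after Proposition \ref{concave proposition}, $\partial v(x) = \partial\tilde v(x) + Cx$. Proposition \ref{concave proposition} then yields the key inequality
\[
\langle p_1 - p_2,\, x_1 - x_2\rangle \;\le\; C\,|x_1 - x_2|^2 \qquad \forall\, p_i \in \partial v(x_i).
\]

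Next, suppose for contradiction that the map $x \mapsto x - tDH(\partial v(x))$ fails to be injective, i.e.\ that there exist $x_1 \neq x_2$ and $p_i \in \partial v(x_i)$ with
\[
x_1 - tDH(p_1) \;=\; x_2 - tDH(p_2)\, .
\]
I would take the inner product of the identity $x_1 - x_2 = t\,(DH(p_1) - DH(p_2))$ against $p_1 - p_2$. By the lower bound $D^2H \ge c_H^{-1}\mathrm{Id}$ from \eqref{convex ineq}, the right-hand side is at least $t\,c_H^{-1}|p_1-p_2|^2$, so
\[
\langle p_1 - p_2,\, x_1 - x_2\rangle \;\ge\; t\,c_H^{-1}\,|p_1 - p_2|^2.
\]
On the other hand, the upper bound $D^2 H \le c_H\mathrm{Id}$ gives $|x_1 - x_2| \le t\,c_H\,|p_1 - p_2|$, so that combining with the monotonicity inequality of the first paragraph I obtain
\[
t\,c_H^{-1}\,|p_1 - p_2|^2 \;\le\; \langle p_1 - p_2, x_1 - x_2\rangle \;\le\; C\,|x_1 - x_2|^2 \;\le\; C\,t^2 c_H^2\,|p_1 - p_2|^2.
\]

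Choosing any $\eps(C)$ strictly smaller than $(C c_H^3)^{-1}$ makes the factor $c_H^{-1} - C t\, c_H^2$ positive for all $t\in[0,\eps(C)]$, forcing $p_1 = p_2$; the defining identity then gives $x_1 = x_2$, contradicting our assumption. There is no serious obstacle here: the only thing to keep track of is that the monotonicity of $\partial \tilde v$ and the two-sided bound on $D^2H$ are global, so the estimate is valid regardless of the size of the supergradient values, and the threshold $\eps(C)$ depends only on $C$ and the constant $c_H$ fixed by \eqref{convex ineq}.
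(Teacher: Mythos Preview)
Your proof is correct and follows essentially the same strategy as the paper: assume a collision, combine the semiconcavity inequality $\langle p_1-p_2,x_1-x_2\rangle\le C|x_1-x_2|^2$ with the uniform convexity bounds on $H$, and derive a contradiction for small $t$. The only difference is organizational: the paper inverts $DH$, sets $p_i=DH^{-1}((x_i-y)/t)$, and uses the monotonicity of $DH^{-1}$ (coming from $D^2H\le c_H Id$) to bound $\langle p_1-p_2,x_1-x_2\rangle$ from below directly by $\frac{1}{2tc_H}|x_1-x_2|^2$; you instead keep $p_1,p_2$ as the variables, bound below in terms of $|p_1-p_2|^2$, and then convert back to $|x_1-x_2|$ via the Lipschitz bound on $DH$. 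Your route uses both sides of \eqref{convex ineq} and yields a slightly worse threshold ($\eps(C)<(Cc_H^3)^{-1}$ rather than $(2Cc_H)^{-1}$), but the argument is equally valid.
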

Here the injectivity of a set--valued map $B$
is understood in the following natural way
$$
x\neq y\qquad \Longrightarrow \qquad B(x)\cap B(y)=\emptyset\, .
$$
\begin{proof}
We assume by contradiction that there exist $x_{1},x_{2}\in
\Omega_{t}$ with $x_1\neq x_2$ and such that:
$$
[x_1 - t DH (\partial v (x_1))]\cap [x_2 - t DH (\partial v
(x_2))]\neq\emptyset.
$$
This means that there is a point $y$ such that
\begin{equation}
\left\{%
\begin{array}{ll}
    \frac{x_{1}-y}{t} \in DH(\partial v (x_{1})), \\
    \frac{x_{2}-y}{t} \in DH(\partial v (x_{2})); \\
\end{array}%
\right.
\Rightarrow
\left\{%
\begin{array}{ll}
    DH^{-1}(\frac{x_{1}-y}{t})\in\partial v (x_{1}), \\
    DH^{-1}(\frac{x_{2}-y}{t})\in\partial v (x_{2}). \\
\end{array}%
\right.
\end{equation}
By the semiconcavity of $v$ we get:
\begin{equation}\label{ineq1}
M(x_{1},x_{2}):=\Big{\langle}
DH^{-1}\Big{(}\frac{x_{1}-y}{t}\Big{)}-DH^{-1}\Big{(}\frac{x_{2}-y}{t}
\Big{)},x_{1}-x_{2}\Big{\rangle}
\leq C |x_1-x_2|^2.
\end{equation}
On the other hand, $D (DH^{-1}) (x) = (D^2 H)^{-1} (DH^{-1} (x))$ (note that in this
formula, $DH^{-1}$ denotes the inverse of the map $x\mapsto DH (x)$, whereas
$D^2 H^{-1} (y)$ denotes the matrix $A$ which is the inverse of the matrix $B:=
D^2 H (y)$).
Therefore $D(DH^{-1}) (x)$ is a symmetric matrix,
with $D(DH^{-1}) (x)\geq c_H^{-1} Id_n$.
It follows that
\begin{align}\label{ineq2}
M(x_{1},x_{2})&=t\Big{\langle}
DH^{-1}\Big{(}\frac{x_{1}-y}{t}\Big{)}-DH^{-1}\Big{(}\frac{x_{2}-y}{t}\Big{)},
\frac{x_{1}-y}{t}-\frac{x_{2}-y}{t}\Big{\rangle}\geq
\nonumber \\
&\geq
\frac{t}{2c_{H}}\Big{|}\frac{x_{1}-y}{t}-\frac{x_{2}-y}{t}
\Big{|}^2\geq
\frac{1}{2 t c_{H}}|x_{1}-x_{2}|^2\geq \frac{1}{2\varepsilon
c_{H}}|x_{1}-x_{2}|^2.
\end{align}
But if $\varepsilon>0$ is small enough, or more precisely if it is
chosen to satisfy $2\varepsilon c_{H}< \frac{1}{C}$ the two
inequalities (\ref{ineq1}) and (\ref{ineq2}) are in contradiction.
\end{proof}

\subsection{Approximation}\label{ss:approx}
We next consider $u$ as in the formulations of
the two lemmas, and $t\in [0,T]$. Then the function
$\tilde{v} (x):= u (x) - C|x|^2/2$ is concave.
Consider the approximations $B_\eta$ (with $\eta>0$) of $\partial \tilde{v}$
given in Definition \ref{d:Hille}. By Theorem
\ref{convergence teo}(i), $B_\eta = D \tilde{v}_\eta$ for
some concave function $\tilde{v}_\eta$ with Lipschitz gradient. Consider therefore
the function $v_\eta (x) = \tilde{v}_\eta (x) + C|x|^2/2$.
The semiconcavity constant of $v_\eta$ is not larger than $C$.

Therefore we can apply Proposition \ref{p:inj} and
choose $\eps (C)$ sufficiently small in such a way that
the maps
\begin{equation}\label{e:girate}
x\;\mapsto\; A(x) = x - t DH (\partial u_t)
\qquad
x\;\mapsto\; A_\eta (x)= x - t DH (Dv_\eta)
\end{equation}
are both injective. Consider next the following measures:
\begin{equation}\label{e:measures}
\mu_\eta (E) \;:=\; |(Id - t DH (Dv_\eta)) (E)| \qquad \mu (E)\;:=\;
|(Id - t DH (\partial u_t)) (E)|\, .
\end{equation}
These measures are well-defined because of the injectivity
property proved in Proposition \ref{p:inj}.

Now, according to Theorem \ref{convergence teo},
the graphs $\Gamma Dv_\eta$ and
$\Gamma \partial u_t$ are both rectifiable currents
and the first are converging, as $\eta\downarrow 0$, to
the latter. We denote them, respectively,
by $T_\eta$ and $T$. Similarly, we can associate
the rectifiable currents $S$ and $S_\eta$ to the graphs
$\Gamma A$ and $\Gamma A_\eta$ of
the maps in \eqref{e:girate}. Note that these graphs can
be obtained by composing $\Gamma \partial u_t$
and $\Gamma Dv_\eta$ with the following
global diffeomorphism of $\R^n$:
$$
(x,y)\;\mapsto\; \Phi (x,y) = x-tDH(y)\, .
$$
In the language of currents we then have
$S_\eta = \Phi_\sharp T_\eta$ and $S=\Phi_\sharp
T$. Therefore, $S_\eta \to S$ in the sense of currents.

We want to show that
\begin{equation}\label{e:convergence}
\mu_\eta \;\rightharpoonup^*\; \mu\, .
\end{equation}
First of all, note that $S$ and $S_\eta$ are rectifiable currents of
multiplicity $1$ supported on the rectifiable sets $\Gamma A = \Phi
(\Gamma \partial u_t)$ and $\Gamma A_\eta = \Phi(\Gamma B_\eta) =
\Phi (\Gamma Dv_\eta)$. Since $B_\eta$ is a Lipschitz map, the
approximate tangent plane $\pi$  to $S_\eta$ in (a.e.)
point $(x, A_\eta (x))$ is spanned by the vectors $e_i + DA_\eta
(x)\cdot e_i$ and hence oriented by the $n$-vector 
$$
\stackrel{\to}{v} \;:=\;
\frac{(e_1 + DA_\eta (x)\cdot e_1)\wedge \ldots \wedge (e_n+DA_\eta (x)\cdot e_n)}
{|(e_1 + DA_\eta (x)\cdot e_1)\wedge \ldots \wedge (e_n+DA_\eta (x)\cdot e_n)|}\, .
$$
Now, by the calculation of Proposition \ref{p:inj},
it follows that $\det DA_\eta\geq 0$. Hence
\begin{equation}\label{e:positivity}
\langle dy_1\wedge \ldots \wedge dy_n,
\stackrel{\to}{v}\rangle \;\geq\; 0\, .
\end{equation}
By the convergence $S_\eta\to S$,
\eqref{e:positivity} holds for the tangent planes
to $S$ as well.

Next, consider a
$\varphi\in C^\infty_c (\Omega_t)$. Since both $\Gamma A$ and
$\Gamma A_\eta$ are bounded sets, consider a
ball $B_R (0)$ such that ${\rm supp}\, (\Gamma A), {\rm supp}\,
(\Gamma A_\eta)\subset \R^n\times B_R (0)$ and
let $\chi\in C^\infty_c (\R^n)$ be a cut-off function
with $\chi|_{B_R (0)} = 1$.
Then, by standard calculations on currents,
the injectivity property of Proposition \ref{p:inj}
and \eqref{e:positivity} imply that
\begin{eqnarray}
\int \varphi d\mu
&=& \langle S, \varphi (x) \chi (y) dy_1\wedge\ldots\wedge dy_n\rangle,\\
\int \varphi d\mu_\eta
&=& \langle S_\eta, \varphi (x) \chi (y) dy_1\wedge\ldots\wedge dy_n
\rangle\, .
\end{eqnarray}
Therefore, since $S_\eta \to S$, we conclude that
$$
\lim_{\eta\downarrow 0} \int \varphi d\mu_\eta
 \;=\; \int \varphi d\mu\, .
$$
This shows \eqref{e:convergence}.

\subsection{Proof of Lemma \ref{ineqlem1}}\label{ss:prima}
First of all we choose $\eps$ so small that the
conclusions of Proposition \ref{p:inj} and
those of Subsection \ref{ss:approx} hold.

We consider therefore, the approximations $v_\eta$
of Subsection \ref{ss:approx}, we define the measures
$\mu$ and $\mu_\eta$ as in \eqref{e:measures}
and the measures $\hat{\mu}$ and $\hat{\mu}_\eta$ as
\begin{equation}\label{e:measures2}
\hat{\mu} (E) \;:=\;
|(Id - (t-\delta) DH (\partial u_t)) (E)|
\qquad \hat{\mu}_\eta (E) \;:=\;
|(Id - (t-\delta) DH (Dv_\eta)) (E)|\, .
\end{equation}
By the same arguments as in Subsection \ref{ss:approx},
we necessarily have $\hat{\mu}_\eta\rightharpoonup^* \hat{\mu}$.

The conclusion of the Lemma can now be formulated as
\begin{equation}\label{e:restated}
\hat{\mu} \;\geq\; \frac{(t-\delta)^n}{t^n}
\mu\, .
\end{equation}
By the convergence of the measures $\mu_\eta$ and
$\hat{\mu}_\eta$ to $\mu$ and $\hat{\mu}$, it suffices
to show
\begin{equation}\label{e:restated10}
\hat{\mu}_\eta \;\geq\; \frac{(t-\delta)^n}{t^n}
\mu_\eta\, .
\end{equation}
On the other hand, since the maps $x\mapsto x-t DH (Dv_\eta)$ and
$x\mapsto x- (t-\delta) DH (Dv_\eta)$ are both injective and
Lipschitz, we can use the area formula to write:
\begin{eqnarray}
\hat{\mu}_\eta (E) &=&
\int_{E}\det\Big{(}Id_n-(t-\delta) D^{2}H(D v_\eta (x))
D^2 v_\eta (x)\Big{)}\, dx,\\
\mu_\eta (E)
&=&  \int_{E}\det\Big{(}Id_n-t D^{2}H(D v_\eta (x))
D^2 v_\eta (x)\Big{)}\, dx
\end{eqnarray}
Therefore, if we set
\begin{eqnarray*}
M_1 (x) &:=& Id_n - (t-\delta) D^2H (Dv_\eta (x))
D^2 v_\eta (x)\\
M_2 (x) &:=& Id_n - t D^2H (Dv_\eta (x))D^2 v_\eta (x)\, \, ,
\end{eqnarray*}
the inequality \eqref{e:restated} is equivalent to
\begin{equation}\label{e:ineqdet}
\det M_1 (x)\;\geq\; \frac{(t-\delta)^n}{t^n} \det M_2 (x) \qquad
\mbox{for a.e. $x$.}
\end{equation}
Note next that
\begin{eqnarray*}
\det M_1 (x) &=&
\det (D^2 H (Dv_\eta (x)))
\det\Big{(}[D^2H(D v_\eta (x))]^{-1}-(t-\delta)
D^2 v_\eta (x)\Big{)}\nonumber\\
\det M_2 (x) &=&
\det (D^2 H (Dv_\eta (x)))
\det\Big{(}[D^2H(Dv_\eta (x))]^{-1}-tD^2 v_\eta (x)\Big{)}
\end{eqnarray*}
Set $A (x):= [D^2H(D v_\eta (x))]^{-1}$ and $B (x)= D^2 v_\eta (x)$.
Then it suffices to prove that:
\begin{equation}
\det (A (x)-(t-\delta) B (x))\;\geq\;
\frac{(t-\delta)^n}{t^n}
\det (A(x)- tB (x))\, .
\end{equation}
Note that
$$
A - (t-\delta) B \;=\; \frac{\delta}{t} A + \frac{t-\delta}{t}
(A-tB)\, .
$$
By choosing $\eps$ sufficiently small (but only depending
on $c_H$ and $C$),
we can assume that $A-tB$ is a positive semidefinite matrix.
Since $A$ is a positive definite matrix, we conclude
\begin{equation}\label{e:>=}
A - (t-\delta) B \;\geq\; \frac{t-\delta}{t}
(A-tB)\, .
\end{equation}
A standard argument in linear algebra shows that
\begin{equation}\label{e:det>=}
\det (A-(t-\delta) B)\;\geq\; \frac{(t-\delta)^n}{t^n}
\det (A-tB)\,
\end{equation}
which concludes the proof.
We include, for the reader convenience, a proof of \eqref{e:>=}
$\Longrightarrow$\eqref{e:det>=}. It suffices to show that,
if $E$ and $D$ are positive semidefinite matrices with $E\geq D$,
then $\det E\geq \det D$. Without loss of generality,
we can assume that $E$ is in diagonal form, i.e.
$E= {\rm diag}\, (\lambda_1, \ldots, \lambda_n)$, and that
$E>D$. Then each
$\lambda_i$ is positive. Define $G:={\rm diag}\, (\sqrt{\lambda_1}
,\ldots, \sqrt{\lambda_n})$. Then
$$
{\rm Id}_n \;\geq\; G^{-1} D G^{-1} = \tilde{D}\, .
$$
Our claim would follow if we can prove $1\geq \det \tilde{D}$, that
is, if we can prove the original claim for $E$ and $D$ in the
special case where $E$ is the identity matrix. But in this case we
can diagonalize $E$ and $D$ at the same time. Therefore $D= {\rm
diag}\, (\mu_1, \ldots, \mu_n)$. But, since $E\geq D\geq 0$, we have
$0\leq \mu_i\leq 1$ for each $\mu_i$. Therefore
$$
\det E\;=\; 1 \;\geq\; \Pi_i \mu_i \;=\; \det D\, .
$$

\subsection{Proof of Lemma \ref{ineqlem2}}
As in the proof above we will show the Lemma by approximation
with the functions $v_\eta$. Once again we introduce
the measures $\mu_\eta$ and $\mu$ of \eqref{e:measures}.
Then, the conclusion of the Lemma can be formulated as
\begin{equation}\label{e:restated2}
\mu\;\geq\; c_0\mathcal{L}^n - t c_1 \Delta u_t\, .
\end{equation}
Since $\Delta v_\eta \rightharpoonup^* \Delta u_t$
by Theorem \ref{convergence teo}(iii), it suffices to
show
\begin{equation}\label{e:restated20}
\mu_\eta\;\geq\; c_0\mathcal{L}^n - t c_1 \Delta v_\eta\, .
\end{equation}
Once again we can use the area formula to
compute
\begin{equation}\label{estimate 0}
\mu_\eta (E) \;=\; \int_{E} \det (D^2 H (Dv_\eta (x)))
\det\Big{(}[D^2H(D v_\eta (x))]^{-1}-tD^2 v_\eta(x)\Big{)} dx
\end{equation}
Since $D^2 H\geq c_H^{-1} Id_n$ and $[D^2 H]^{-1}\geq c_H^{-1}
Id_n$, we can estimate
\begin{equation}\label{estimate 1}
\det (D^2 H (Dv_\eta (x))) \det\Big{(}[D^2H(D v_\eta (x))]^{-1}-tD^2
v_\eta(x)\Big{)} \;\geq\; c_H^{-n} \det \left(\frac{1}{c_H} Id_n - t
D^2 v_\eta (x)\right)
\end{equation}
arguing as in Subsection \ref{ss:prima}. If we choose $\varepsilon$
so small that $0<\varepsilon <\frac{1}{2c_{H}C}$, then $M (x)
:= \frac{1}{2c_{H}}Id_n-tD^{2} v_\eta (x)$ is positive semidefinite.
Therefore
\begin{equation}\label{estimate 2}
\det (D^2 H (Dv_\eta (x))) \det\Big{(}[D^2H(D v_\eta (x))]^{-1}-tD^2
v_\eta(x)\Big{)} \;\geq\; c_H^{-n} \det \left(\frac{1}{2c_H} Id_n +
M (x)\right)\,.
\end{equation}
Diagonalizing $M (x) = {\rm diag}
(\lambda_1, \ldots, \lambda_n)$,
we can estimate
\begin{eqnarray}
\det \left(\frac{1}{2c_{H}}Id_{n}+ M (x)\right) &=&
\left(\frac{1}{2c_H}\right)^n  \prod_{i=1}^{n} (1+ 2c_H \lambda_i)
\;\geq\; \left(\frac{1}{2c_H}\right)^n (1 + 2c_H {\rm Tr}\, M (x))
\nonumber\\
&=& c_2 - c_3 t \Delta v_\eta (x)\, .\label{e:estimate2}
\end{eqnarray}
Finally, by \eqref{estimate 0},
\eqref{estimate 1}, \eqref{estimate 2} and
\eqref{e:estimate2}, we get
$$
\mu_\eta (E) \;\geq\; \int_E (c_0 - c_1t \Delta v_\eta (x))\,
dx\,.
$$
This concludes the proof.

\section{Proofs of Lemma \ref{nonincreasing} and
Lemma \ref{cantorlem}}

\subsection{Proof of Lemma \ref{nonincreasing}}
The claim follows from the following consideration:
\begin{equation}\label{e:inclusion}
\chi_{t,0} (\Omega_t) \subset \chi_{s,0} (\Omega_s)
\qquad \mbox{for every $0\leq s\leq t\leq T$.}
\end{equation}
Indeed, consider $y\in \chi_{t,0} (\Omega_t)$. Then there exists
$x\in \Omega_t$ such that $y$ is the unique minimum of
\eqref{Hopf-Lax}. Consider $z:= \frac{s}{t} x + \frac{t-s}{t} y$.
Then $z\in \Omega_s$. Moreover, by Proposition \ref{properties visc
sol}(iv), $y$ is the unique minimizer of $u_0 (w) + s L ((z-w)/ s)$.
Therefore $y=\chi_{s,0} (z)\in \chi_{s,0} (\Omega_s)$.

\subsection{Proof of Lemma \ref{cantorlem}}
First of all, by Proposition \ref{p:cont2},
we can select a Borel set $E$ of measure $0$ such that
\begin{itemize}
\item $\partial u_t (x)$ is single-valued for every $x\in E$;
\item $|E|=0$;
\item $|D^2_c u_t| (\Omega_t\setminus E)=0$.
\end{itemize}
If we assume that our statement were false, then there would exist
a compact set $K\subset E$ such that 
\begin{equation}\label{e:contra4}
|D^2_c u_t| (K)\;>\; 0\, .
\end{equation}
and $X_{t,0} (K) = \chi_{t,0} (K)
\subset \chi_{t+\delta, 0} (\Omega_{t+\delta})$.
Therefore it turns out that $X_{t,0} (K)
= \chi_{t+\delta,0} (\tilde{K}) = X_{t+\delta, 0} (\tilde{K})$ for some
Borel set $\tilde{K}$. 

Now, consider $x\in \tilde{K}$
and let $y := \chi_{t+\delta, 0} (x)\in
X_{t+\delta, 0} (\tilde{K})$ and $z := \chi_{t+\delta, t} (x)$.
By
Proposition \ref{properties visc sol}(iv), $y$
is the unique minimizer of $u_0 (y) + t L ((z-y)/t)$,
i.e. $\chi_{t, 0} (z) = y$.

Since $y\in \chi_{t, 0} (K)$, there exists
$z'$ such that $\chi_{t,0} (z')$. On the other hand,
by Proposition \ref{p:inj}, provided $\eps$ has been
chosen sufficiently small,
$\chi_{t,0}$ is an injective map. Hence we necessarily have
$z'=z$. This shows that
\begin{equation}\label{e:inclusion2}
X_{t+\delta, t} (\tilde{K}) \subset K\, .
\end{equation}
By Lemma \ref{ineqlem1},
\begin{equation}\label{e:contra1}
|K| \;\geq\; |X_{t+\delta, t} (\tilde{K})|
\;\geq\; \frac{\delta^n}{(t+\delta)^n}
|X_{t+\delta, 0} (\tilde{K})|
\;=\; \frac{\delta^n}{(t+\delta)^n} |X_{t,0} (K)|\, .
\end{equation}
Hence, by Lemma \ref{ineqlem2}
\begin{equation}\label{e:contra2}
|K|\;\geq\; c_0 |K| - c_1 t \frac{\delta^n}{(t+\delta)^n}
\int_K d\, \Delta u_t\, .
\end{equation}
On the other hand, recall that $K\subset E$ and $|E|=0$. Thus,
$\int_K d\,\Delta_s u_t = \int_K d\, \Delta u_t \geq 0$.
On the other hand $\Delta_s u_t \leq 0$ (by the semiconcavity
of $u$). Thus we conclude that $\Delta_s u_t$, and hence
also $\Delta_c u_t$, vanishes indentically on $K$.
However, arguing as in Subsection \ref{ss:main},
we can show $-\Delta_c u_t \geq |D^2_c u_t|$, and hence,
recalling \eqref{e:contra4}, $-\Delta_c u_t (K)>0$.
This is a contradiction
and hence concludes the proof.

\end{document}